\newtheorem{theorem}{Theorem}
\newtheorem{lemma}{Lemma}
\newtheorem{assumption}{Assumption}
\newtheorem{condition}{Condition}
\newtheorem{corollary}{Corollary}
\newtheorem{proposition}{Proposition}
\theoremstyle{remark}
\newtheorem{remark}{Remark}
\DeclareMathOperator{\supp}{supp}
\date{}
\title[{P}oincar\'{e}-{B}eckner inequality]{A new multicomponent {P}oincar\'{e}-{B}eckner inequality}
  \author[S.~Kondratyev]{Stanislav Kondratyev}
\address[S.~Kondratyev]{CMUC, Department of
Mathematics, University of Coimbra, 3001-501 Coimbra, Portugal}{}
\email{kondratyev@mat.uc.pt}
\author[L.~Monsaingeon]{L\'{e}onard Monsaingeon}
\address[L.~Monsaingeon]{CAMGSD, Instituto Superior T\'ecnico, University of Lisbon, 1049-001 Lisboa
   Portugal}{}
\email{leonard.monsaingeon@ist.utl.pt}
\author[D.~Vorotnikov]{Dmitry Vorotnikov}
\address[D.~Vorotnikov]{CMUC, Department of
Mathematics, University of Coimbra, 3001-501 Coimbra, Portugal}{}
\email{mitvorot@mat.uc.pt}
\begin{document}
\begin{abstract}
We prove a new vectorial functional inequality of {P}oincar\'{e}-{B}eckner type. The inequality may be interpreted as an entropy-entropy production one for a gradient flow in the metric space of Radon measures. The proof uses subtle
analysis of combinations of related  super- and sub-level sets employing the coarea formula and the relative isoperimetric inequality. 
\end{abstract}
\maketitle

Keywords: {P}oincar\'{e} inequalities, coarea formula, optimal transport, gradient flow

\vspace{10pt}

\textbf{MSC [2010] 26D10, 49Q20, 52B11, 58B20}

\section{Introduction} Let $\Omega \subset \mathbb R^d$ be a bounded, connected, open domain. Fix a vector function $\mathbf{m} \in C^1(\overline \Omega; \mathbb R^N)$ and a matrix function $A
\in C^1(\overline \Omega; M_N(\mathbb R))$.  In this paper, we contemplate the inequality \begin{equation}
\label{eq:kmpv1-int}
\int_\Omega \sum_{i=1}^N |f_{i}|^p \, \mathrm dx
\le
C
\int_\Omega \sum_{i=1}^N u_{i} (|f_{i}|^p + | \nabla f_{i} |^p) \, \mathrm dx
,
\end{equation} where $\mathbf{u} \in W_p^1(\Omega; \mathbb R^N)$ is a vector function with non-negative components $u_i\geq 0$, and $\mathbf{f}=\mathbf{f}(\mathbf{u}) =\mathbf{m}-A\mathbf{u}$. 

A quick glimpse suggests that \eqref{eq:kmpv1-int} is trivial when all  components of $\mathbf{u}$ are bounded away from zero. On the other hand, given an index set $I \subset \{1, \dots, N\}$ and a solution $\mathbf{u}$ to the linear 
system
\begin{equation}
\label{eq:vi-int}
\left\{
\begin{array}{l}
u_i = 0 \quad (i \in I)
\\
f_j = 0 \quad (j \notin I),
\end{array}
\right.
\end{equation}
inequality \eqref{eq:kmpv1-int} is clearly violated unless $I=\varnothing$. Under suitable structural assumptions on $A$ and $\mathbf{m}$ (roughly speaking, we need that \eqref{eq:vi-int} has a unique non-negative solution $\mathbf{u}_I$ for any $I$), we will show that it is enough for a function $\mathbf{u}$ to stay away from the solutions to \eqref{eq:vi-int} with $I\neq\varnothing$ in order to comply with inequality \eqref{eq:kmpv1-int}. The only solution of \eqref{eq:vi-int} compatible with the inequality is thus $$
\mathbf u^{\infty}:=\mathbf u_\varnothing=A^{-1}\mathbf m.
$$

In the case $N=1$ the only solution of \eqref{eq:vi-int} with $I\neq\varnothing$ is $\mathbf{u}\equiv 0$, thus $C$ in \eqref{eq:kmpv1-int} is expected to blow up only as $\mathbf{u}=u_1$ approaches zero in some sense. Indeed, we have recently proved in \cite{KMV15} that $C$ can be chosen in the form $1 /{\Phi(\int_\Omega u_1)}$, where $\Phi$ is a strictly increasing continuous function with $\Phi(0)=0$ (provided $N=1$, $\mathbf{m}=m_1(x)>0$ and $A\equiv 1$). The proof in \cite{KMV15} uses a generalized Beckner inequality \cite[Lemma 4]{chainais13entropy}, that is why we refer to \eqref{eq:kmpv1-int} as {P}oincar\'{e}-{B}eckner inequality. However, that proof completely fails in the multicomponent case due to implicit cross-diffusion effects. 

Our interest to \eqref{eq:kmpv1-int} comes from the fact that in the case of symmetric positive-definite matrix $A(x)$ and $p=2$ inequality \eqref{eq:kmpv1-int} is equivalent to an entropy-entropy production inequality corresponding to the gradient flow of the geodesically non-convex entropy functional \begin{equation} \label{e:15}
\mathcal E(\mathbf u)=\frac 12 \int_\Omega A(\mathbf u-\mathbf u^{\infty})\cdot(\mathbf u-\mathbf u^{\infty})
\end{equation} on the space of $N$-dimensional non-negative Radon measures on $\Omega$ equipped with the unbalanced optimal transport distance and induced Riemannian structure as recently introduced in \cite{KMV15} (see also \cite{LMS_big_2015,peyre_1_2015, LMS_small,peyre_2,MG16}).  This gradient flow coincides with a fitness-driven PDE system of population dynamics involving degenerate cross-diffusion. Inequality \eqref{eq:kmpv1-int} implies exponential convergence of the trajectories of this gradient flow to the coexistence steady-state $\mathbf u^{\infty}$ which corresponds to the so-called ideal free distribution \cite{FC69,fr72} of the populations.  We refer to our companion paper \cite{KMV16-1} for the details of this interpretation of \eqref{eq:kmpv1-int} and its implications.  

The proof of \eqref{eq:kmpv1-int} which we carry out in this paper is non-standard, being based on a subtle
analysis of suitable unions of super-level sets of the components of $\mathbf{f}$ employing the coarea formula and the relative isoperimetric inequality.  Assuming that there exists a sequence violating the inequality, either we conclude that it converges one of the degenerate states $\mathbf{u}_I$, or we can detect a drop of $f_i$ that can be exploited to estimate the total variation of $f_i$ by means of the coarea formula. To apply this consideration to the term
\begin{equation*}
\int_\Omega \sum_{i=1}^N u_{i} | \nabla f_{i} |^p \, \mathrm dx,
\end{equation*}
we must consider the variation of $f_i$ over the region where $u_i$ is not small. However, due to the hidden cross-diffusion nature of the problem,  this produces ``holes'' in the level sets of $f_i$, and we cannot use the relative isoperimetric inequality to estimate the perimeter of the super-level sets. We patch the holes by merging certain super- and sub-level sets of different $f_i$. Since we argue by contradiction, we are not able to quantify the constant $C$ in \eqref{eq:kmpv1-int}.

The paper is organized as follows. In Section \ref{sec:statement}, we give our structural conditions on $A$ and $\mathbf m$, and present the main results. In Section \ref{sec:aux}, we state some algebraic and analytical properties of $\mathbf{f}(\mathbf{u})$ and related nonlinear functions whose proofs may be found in the Appendix. In Section \ref{sec:preliminaries}, we derive the main estimates for the sequences allegedly violating \eqref{eq:kmpv1-int}. In Section \ref{sec:limit-behav-sequ}, we identify three possible scenarios which are determined by behavior of suitable combinations of super- and sub-level sets of $f_i$. The first alternative leads to the convergence to $\mathbf{u}_I$ (Section \ref{sec:convergence-case-i}). The second and the third are the most involved ones, and employ the geometric ideas described above, see Sections \ref{sec:imposs-case-ii} and \ref{sec:imposs-case-iii}. 

\section{The main results}
\label{sec:statement}

Let $\Omega \subset \mathbb R^d$ be a bounded, connected, open domain.  We 
assume that it admits the 
relative isoperimetric inequality, cf.~\cite[Remark 12.39]{Mag12}:
\begin{equation}
P(A; \Omega) \ge c_\Omega |A|^{\frac{d-1}{d}},
\quad A \subset \Omega,\ |A| \le \frac 12 | \Omega |
.
\end{equation}
Here $P(A; \Omega)$ denotes the relative perimeter of a Lebesgue measurable 
$A$ of locally finite perimeter with respect to $\Omega$.

Suppose we are given a vector function $\mathbf{m} =(m_{1}, \dots,
m_{N})\in C^1(\overline \Omega; \mathbb R^N)$ and a matrix function $A
= (a_{ij}) \in C^1(\overline \Omega; M_N(\mathbb R))$.  We assume that there 
exists $\kappa > 0$ independent of $x \in \overline \Omega$ such that

\begin{assumption}
\label{a:bounded}
We have pointwise
\begin{gather}
|a_{ij}| \le \frac 1\kappa \quad (i,j = 1, \dots, N; \; x \in \overline 
\Omega),
\label{eq:am1}
\\
m_{i} \le \frac 1\kappa \quad (i = 1, \dots, N; \; x \in \overline \Omega).
\label{eq:am2}
\end{gather}
\end{assumption}

\begin{assumption}
\label{a:det-a}
For any $ I = \{i_1, \dots, i_r\} \subset \{1, \dots, N\}$, $i_1 < \dots < 
i_r$, we have
\begin{equation}
\label{eq:am3}
\begin{vmatrix}
a_{i_1i_1} & \cdots  & a_{i_1i_r} \\
\vdots   & \ddots & \vdots \\
a_{i_ri_1} & \cdots & a_{i_ri_r}
\end{vmatrix}
\ge \kappa
.
\end{equation}
\end{assumption}

\begin{assumption}
\label{a:det-am}
For any $ I = \{i_1, \dots, i_r\} \subset \{1, \dots, N\}$, $i_1 < \dots < 
i_r$, and $j \notin I$ we have
\begin{equation}
\label{eq:am4}
\begin{vmatrix}
a_{i_1i_1} & \cdots  & a_{i_1i_r} & m_{i_1} \\
\vdots   & \ddots & \vdots & \vdots \\
a_{i_ri_1} & \cdots & a_{i_ri_r} & m_{i_r} \\
a_{ji_1} & \cdots & a_{ji_r} & m_{j}
\end{vmatrix}
\ge \kappa
.
\end{equation}
\end{assumption}

\begin{remark}
Letting $I = \varnothing$ in \eqref{eq:am4}, we see that all the functions 
$m_j$ are necessarily positive.
\end{remark}

\begin{remark}
Assumptions~\ref{a:det-a} and~\ref{a:det-am} allow for a geometrical 
interpretation, see Section~\ref{sec:aux}.
\end{remark}

\begin{remark}
For a symmetric matrix $A$, Assumption \ref{a:det-a} is equivalent to uniform positive definiteness.  However, we do not assume $A$ to be necessarily symmetric.
\end{remark}

Given a vector function $\mathbf{u} = (u_{1}, \dots, u_{N}) \colon
\overline \Omega \to \mathbb R^N$, set
\begin{equation*}
  f_i = m_i - \sum_{i = 1}^N a_{ij} u_{j} \colon \overline \Omega \to
  \mathbb R
.
\end{equation*}

\begin{theorem}
\label{th:kmpv}
Suppose that $A$ and $\mathbf m$ satisfy 
Assumptions~\ref{a:bounded}--\ref{a:det-am} and let $p \ge 1$ and $\mathbf U 
\subset W^1_p(\Omega; \mathbb R^N)$ be a set of functions such that

(i) $\mathbf u \ge 0$ for any $\mathbf u \in \mathbf U$;

(ii) no bounded with respect to the $L^p$ norm sequence $\{\mathbf u_n = 
(u_{1n}, \dots, u_{Nn})\} \subset \mathbf U$ admits a nonempty index set $I 
\subset \{1, \dots, N\}$ so that \begin{gather}
u_{in} \xrightarrow[n \to \infty]{} 0 \quad (i \in I) \quad \text{in measure}
\label{eq:kmpv3}
  \\
f_{kn} \xrightarrow[n \to \infty]{} 0 \quad (k \notin I) \quad \text{in 
measure}
\label{eq:kmpv4}
\end{gather}
Then there exists $C > 0$ such that
\begin{equation}
\label{eq:kmpv1}
\int_\Omega \sum_{i=1}^N |f_{i}|^p \, \mathrm dx
\le
C
\int_\Omega \sum_{i=1}^N u_{i} (|f_{i}|^p + | \nabla f_{i} |^p) \, \mathrm dx
\quad
(\mathbf u = (u_1, \dots, u_N) \in \mathbf U)
.
\end{equation}
\end{theorem}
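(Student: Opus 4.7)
My plan is to argue by contradiction. Suppose no such $C$ exists; then by homogeneity one obtains a sequence $\{\mathbf{u}_n\}\subset\mathbf{U}$, normalized so that
$$\sum_i\int_\Omega |f_{in}|^p\,dx=1,\qquad \sum_i\int_\Omega u_{in}\bigl(|f_{in}|^p+|\nabla f_{in}|^p\bigr)\,dx\xrightarrow[n\to\infty]{}0.$$
Because Assumptions~\ref{a:bounded}--\ref{a:det-a} give a uniform pointwise bound on $A^{-1}$, writing $\mathbf{u}_n=A^{-1}(\mathbf{m}-\mathbf{f}_n)$ together with $\|\mathbf{m}\|_\infty\le 1/\kappa$ produces an $L^p$-bound for $\mathbf{u}_n$, so hypothesis~(ii) of the theorem is applicable to any subsequence. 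It therefore suffices to extract a subsequence along which \eqref{eq:kmpv3}--\eqref{eq:kmpv4} hold for some non-empty index set $I$.

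The geometric core of the proof is an analysis of the super- and sub-level sets $\{f_{in}>t\}$ and $\{u_{in}<\delta\}$ together with their intersections. Since $u_{in}|f_{in}|^p\to 0$ in $L^1$, on any region where $u_{in}$ stays bounded away from $0$ the component $f_{in}$ must be small in $L^p$; for each $i$ one therefore expects either $u_{in}\to 0$ in measure or $f_{in}\to 0$ in measure. The argument distinguishes three alternatives (Section~\ref{sec:limit-behav-sequ}): (i) this dichotomy holds simultaneously for all indices, producing a non-empty partition $I$ and its complement realising \eqref{eq:kmpv3}--\eqref{eq:kmpv4}, which contradicts hypothesis~(ii); (ii) some set of the form $\{f_{in}>t\}\cap\{u_{in}>\delta\}$ has quantitatively persistent positive measure, uniformly in $n$; (iii) such persistence appears only after combining a super-level set of $f_{in}$ with a super- or sub-level set of a different component $f_{jn}$.

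In Cases~(ii) and~(iii) the strategy is to convert the smallness of $\int u_{in}|\nabla f_{in}|^p\,dx$ into a bound for $\int_{\{u_{in}>\delta\}}|\nabla f_{in}|\,dx$ by H\"older, then apply the coarea formula
$$\int_\Omega|\nabla f_{in}|\,dx=\int_{\mathbb R}P\bigl(\{f_{in}>t\};\Omega\bigr)\,dt$$
and the relative isoperimetric inequality to $|\{f_{in}>t\}\cap\{u_{in}>\delta\}|^{(d-1)/d}$ on a set of $t$-values of positive length. The obstacle, and the novelty of the paper, is that restricting to $\{u_{in}>\delta\}$ perforates the super-level sets of $f_{in}$ with ``holes'' along which the relative perimeter cannot be estimated by the isoperimetric inequality. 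The patching trick announced in the introduction is to enlarge $\{f_{in}>t\}\cap\{u_{in}>\delta\}$ by adjoining carefully chosen super- or sub-level sets of other $f_{jn}$ whose measures are controlled through the same normalization, producing a new set with both controlled measure and controlled relative perimeter; the isoperimetric inequality then forces a quantitative lower bound on $\int u_{in}|\nabla f_{in}|^p$, contradicting its vanishing.

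The hard part is precisely this geometric surgery: the patches must simultaneously keep volumes small, keep the relative perimeter under control, and remain compatible with the algebraic identities relating $\mathbf{f}$ to the degenerate solutions $\mathbf{u}_I$ to be collected in Section~\ref{sec:aux}. The thresholds $t$ and $\delta$ must be chosen so that, after integrating the coarea identity, enough ``good'' $t$-values survive the patching to close the contradiction; this is where the distinction between Cases~(ii) and~(iii) becomes essential and where the non-symmetric determinant Assumption~\ref{a:det-am} finally enters, preventing the patched sets from degenerating to one of the forbidden solutions of \eqref{eq:vi-int}.
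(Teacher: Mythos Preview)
Your overall strategy matches the paper's: argue by contradiction, obtain an $L^p$ bound on the violating sequence so that hypothesis~(ii) applies, split into cases according to which components $u_{in}$ and $f_{in}$ are small, and in the non-degenerate cases produce a lower bound on $\int u_{in}|\nabla f_{in}|^p$ via the coarea formula and the relative isoperimetric inequality after patching level sets across components.

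There is, however, a genuine gap at the very first step. You write ``by homogeneity one obtains a sequence normalized so that $\sum_i\int|f_{in}|^p=1$'', but the inequality is \emph{not} homogeneous: $\mathbf f=\mathbf m-A\mathbf u$ is affine in $\mathbf u$, and the elements of $\mathbf U$ cannot be rescaled without leaving $\mathbf U$. You therefore have no a~priori bound on $\|\mathbf f_n\|_{L^p}$, and your derivation of the $L^p$ bound on $\mathbf u_n$ from $\mathbf u_n=A^{-1}(\mathbf m-\mathbf f_n)$ breaks down. The paper obtains this bound by a different and more delicate route: it introduces the weighted average $v_n=g_n^{-1}\sum_i u_{in}|f_{in}|^p$ (with $g_n=\sum_i|f_{in}|^p$), shows via Lemma~\ref{lem:gstab} that on $[v_n\le\varepsilon_n]$ the vector $\mathbf u_n$ is pointwise close to one of the finitely many $\mathbf u_I$ and hence $g_n$ is pointwise bounded by a constant depending only on $\kappa$, and then uses the contradiction hypothesis to force $\int_{[v_n>\varepsilon_n]}g_n$ to be bounded as well. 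Only then is $\{g_n\}$ bounded in $L^1$ and $\{\mathbf u_n\}$ bounded in $L^p$. This is not a formality; it is where Assumption~\ref{a:det-a} first enters.

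Your trichotomy is also looser than what is needed. The case ``all $f_{in}\to 0$ in measure'' corresponds to $I=\varnothing$ in your dichotomy and is not caught by your alternative~(i), nor clearly by your~(ii) or~(iii) as stated. The paper organises the cases through the sets $A_n(I)=\bigcap_{i\in I}[f_{in}>\sigma]\cap\bigcap_{j\notin I}[|f_{jn}|<\sigma/2]$ and proves $\sum_I|A_n(I)|\to|\Omega|$; the alternatives are then (i) some non-empty $I$ captures full measure, (ii) $I=\varnothing$ captures full measure, (iii) two distinct $I_1,I_2$ retain positive measure. The patching in~(ii) and~(iii) is performed on explicit unions of super- and sub-level sets of the $f_{in}$ (not of $u_{in}$), and the key ingredient you do not mention---Lemma~\ref{lem:obs}, stating that $u_j\le\sigma$ and $f_j\le\sigma$ together force some other $f_i\le-\sigma$---is precisely what guarantees that the relative boundary in $\Omega$ of the patched set lies entirely in $\bigcap_i[u_{in}>\varepsilon_n]$, so that its perimeter really is controlled by $\int u_{in}|\nabla f_{in}|^p$.
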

\begin{remark}
The integrand in the right-hand side of \eqref{eq:kmpv1} is nonnegative, and 
the integral may be infinite.
\end{remark}
\begin{remark}
For $p>1$, by Vitali's theorem, the convergence in measure in \eqref{eq:kmpv3}, 
\eqref{eq:kmpv4} can be replaced by the convergence in $L^q$, $1 \le q < p$.
\end{remark}

Condition (ii) of Theorem~\ref{th:kmpv} means that the set $\mathbf U$ must be 
separated from a finite number of specific points in the topology of 
convergence in measure.  Specifically, it follows from 
Assumption~\ref{a:det-a} that given $I \subset \{1, \dots, N\}$, the linear 
system
\begin{equation}
\label{eq:vi}
\left\{
\begin{array}{l}
u_i = 0 \quad (i \in I)
\\
f_j = 0 \quad (j \notin I),
\end{array}
\right.
\end{equation}
has a unique solution $\mathbf{u}_I \in C^1(\overline \Omega; \mathbb R^N)$.  
It is easy to see that \eqref{eq:kmpv3} and \eqref{eq:kmpv4} are equivalent to
\begin{equation}
\label{eq:varconvergence}
\mathbf u_n \to \mathbf{u}_I \quad (n \to \infty)
\quad \text{in measure}
.
\end{equation}

Theorem~\ref{th:kmpv} admits the following stronger formulation.

Solving~\eqref{eq:vi} by Cramer's rule and recalling the assumptions, we see 
that all the functions $\mathbf u_I = (u_{I1}, \dots, u_{In})$ are bounded by 
a constant depending only on $\kappa$.  Let $M = M(\kappa)$ be an arbitrary 
number such that
\begin{equation}
  \label{eq:7.1}
  M > \sup \{ u_{Ii} \colon  I \subset \{1, \dots, N\};
  i \in \{1, \dots, N\} \}
  .
\end{equation}

\begin{theorem}
\label{th:kmpv-trunc}
Let $p \ge 1$, $\mathcal A \subset C^1(\overline \Omega; M_N(\mathbb R) \times \mathbb R^N)$, and $\mathbf U \subset W^1_p(\Omega; \mathbf R^N)$ 
be such that

(i) any $(A, \mathbf m) \in \mathcal A$ satisfies 
Assumptions~\ref{a:bounded}--\ref{a:det-am} with a constant $\kappa = 
\kappa(\mathcal A)$;

(ii) $\mathbf u \ge 0$ for any $\mathbf u \in \mathbf U$;

(iii) one cannot choose sequences $\{\mathbf u_n\} \subset \mathbf U $ and 
$\{(A_n, \mathbf m_n)\} \subset \mathcal A$ such that $\{\mathbf u_n\}$ is 
bounded in $L^p$, and \eqref{eq:kmpv3} and \eqref{eq:kmpv4} hold for an $I \ne 
\varnothing$.

Then there exists $C(\Omega, p, \kappa, \mathcal A, \mathbf U) > 0$ such that
\begin{equation}
\label{eq:kmpv-trunc-1}
\int_\Omega \sum_{i=1}^N |f_{i}|^p \, \mathrm dx
\le
C(\Omega, p, \kappa, \mathcal A, \mathbf U)
\int_\Omega \sum_{i=1}^N \tilde u_{i} (|f_{i}|^p + | \nabla f_{i} |^p) \, 
\mathrm dx
\end{equation}
for all $\mathbf u  = (u_1, \dots, u_N)\in \mathbf U$ and $(A, \mathbf m) \in 
\mathcal A$,
where
\begin{equation}
\label{eq:kmpv-trunc-2}
\tilde u_{in}(x) = \min(u_{in}(x), M)
.
\end{equation}
\end{theorem}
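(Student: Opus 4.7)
The plan is to proceed by contradiction, recycling the trichotomy scheme developed for Theorem~\ref{th:kmpv} in Sections~\ref{sec:preliminaries}--\ref{sec:imposs-case-iii}, now tracking the dependence on varying $(A,\mathbf m)\in\mathcal A$ and verifying that the truncation of the weight to $\tilde u_{in} = \min(u_{in}, M)$ does not spoil the geometric arguments. Supposing that no such $C$ exists, extract sequences $\{\mathbf u_n\}\subset\mathbf U$ and $\{(A_n,\mathbf m_n)\}\subset\mathcal A$ with
\[
\int_\Omega\sum_i|f_{in}|^p\,\mathrm dx > n\int_\Omega\sum_i\tilde u_{in}\bigl(|f_{in}|^p+|\nabla f_{in}|^p\bigr)\,\mathrm dx.
\]
A short preliminary argument based on the pointwise bounds of Assumption~\ref{a:bounded} and the fact that $\tilde u_{in}=M$ whenever $u_{in}\ge M$ shows that $\{\mathbf u_n\}$ must be bounded in $L^p$ --- otherwise the ratio above cannot tend to zero --- so the sequence falls within the framework of hypothesis~(iii).

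Next, exploit compactness of $\mathcal A$. By Assumption~\ref{a:bounded} applied uniformly with $\kappa=\kappa(\mathcal A)$, the sequence $(A_n,\mathbf m_n)$ is bounded in $L^\infty(\Omega)$, so a subsequence converges weak-$*$ in $L^\infty$ and a.e.\ to some $(A_\infty,\mathbf m_\infty)$. Since the inequalities \eqref{eq:am1}--\eqref{eq:am4} are pointwise and polynomial in the entries, they survive a.e.\ limits, so $(A_\infty,\mathbf m_\infty)$ still obeys Assumptions~\ref{a:bounded}--\ref{a:det-am} with the same $\kappa$, and the corresponding limiting states $\mathbf u_I^\infty$ solving \eqref{eq:vi} are well-defined. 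Now run the trichotomy of Section~\ref{sec:limit-behav-sequ} on $(\mathbf u_n,A_n,\mathbf m_n)$. The coarea and relative-isoperimetric arguments of Sections~\ref{sec:imposs-case-ii}--\ref{sec:imposs-case-iii} that exclude Cases~(ii) and~(iii) use the weight $u_{in}$ only through lower bounds $u_{in}\ge\varepsilon$ on distinguished regions, with $\varepsilon$ depending only on $\Omega$, $p$, $\kappa$ and much smaller than $M$ (which was chosen to exceed every $u_{Ii}$); on these regions $\tilde u_{in}=u_{in}$, so the same arguments continue to rule out the two cases. We are therefore forced into Case~(i), yielding a nonempty $I$ along which \eqref{eq:kmpv3}--\eqref{eq:kmpv4} hold, which directly contradicts hypothesis~(iii).

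The main obstacle is bookkeeping: one has to verify that every constant produced in the arguments of Sections~\ref{sec:preliminaries}--\ref{sec:imposs-case-iii} depends only on $\Omega$, $p$, $\kappa$, and the abstract class $\mathbf U$, and never on a particular pair $(A,\mathbf m)$. Because the algebraic preliminaries of Section~\ref{sec:aux} (determinant lower bounds and quantitative invertibility on each index set) and the geometric inputs (the relative isoperimetric constant $c_\Omega$ and the coarea formula) only see $\kappa$ and $\Omega$, this uniformity does hold, but tracking it through the entire proof is what truly promotes the pointwise Theorem~\ref{th:kmpv} to the uniform Theorem~\ref{th:kmpv-trunc}.
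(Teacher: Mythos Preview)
Your overall scheme---contradiction, trichotomy on the sets $A_n(I)$, and ruling out cases (ii) and (iii) via the coarea formula and the relative isoperimetric inequality---is exactly the paper's. In fact the paper runs the argument in the \emph{opposite} logical direction: Sections~\ref{sec:preliminaries}--\ref{sec:imposs-case-iii} constitute the direct proof of Theorem~\ref{th:kmpv-trunc} (with $\tilde u_{in}$ and variable $(A_n,\mathbf m_n)$ built in from the first line), and Theorem~\ref{th:kmpv} is the corollary. So there is nothing to ``promote''; the trichotomy machine is already the proof of the truncated, uniform statement.

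Two points in your write-up deserve correction. First, your argument for $L^p$-boundedness of $\{\mathbf u_n\}$ (``otherwise the ratio cannot tend to zero'') is not justified as stated: since the weight is truncated to $M$, large values of $u_{in}$ do not automatically inflate the right-hand side. The paper instead derives boundedness \emph{after} setting up the contradiction: from the basic splitting one gets that $\int_{[v_n>\varepsilon_n]} g_n$ is bounded, while on $[v_n\le\varepsilon_n]$ Lemma~\ref{lem:gstab} forces $\mathbf u_n$ to lie near some $\mathbf u_I$ and hence $g_n$ is pointwise bounded there; together this gives $\{g_n\}$ bounded in $L^1$ and thus $\{\mathbf u_n\}$ bounded in $L^p$. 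Second, your compactness step for $(A_n,\mathbf m_n)$ is both unnecessary and flawed: weak-$*$ convergence in $L^\infty$ does not yield a.e.\ convergence, and there is no $C^1$ bound available to upgrade it. The paper avoids this entirely because every auxiliary result in Section~\ref{sec:aux} (Lemma~\ref{lem:obs}, Corollaries~\ref{cor:genlimv} and~\ref{cor:sigma1}) is already stated uniformly over all admissible coefficients with the given $\kappa$; one simply carries the index $n$ on $A_n$, $\mathbf m_n$ throughout without ever passing to a limit in the coefficients.
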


\section{Auxiliary functions}
\label{sec:aux}

In this section we collect a few auxiliary results concerning systems of 
affine functions on $\mathbb R^N$
\begin{equation}
\label{eq:f}
f_i(u_1, \dots, u_N) = m_i - \sum_{j = 1}^N a_{ij} u_j \quad (i = 1, \dots, N)
\end{equation}
with scalar coefficients.  The proofs of the statements can be found in the 
Appendix.

We say that the coefficients in~\eqref{eq:f} are \emph{admissible}, if they 
satisfy  Assumptions~\ref{a:bounded}, \ref{a:det-a}, and \ref{a:det-am} with a 
fixed $\kappa > 0$.

Given $ I = \{i_1, \dots, i_r\} \subset \{1, \dots, N\}$, $i_1 < \dots < i_r$, 
and $j \in \{1, \dots, N\}$, denote the determinants in the right-hand sides 
of~\eqref{eq:am3} and~\eqref{eq:am4} by
\begin{equation*}
\Delta_I
=
\begin{vmatrix}
a_{i_1i_1} & \cdots  & a_{i_1i_r} \\
\vdots   & \ddots & \vdots \\
a_{i_ri_1} & \cdots & a_{i_ri_r}
\end{vmatrix}
,
\quad
\Delta_{I,j}=
\begin{vmatrix}
a_{i_1i_1} & \cdots  & a_{i_1i_r} & m_{i_1} \\
\vdots   & \ddots & \vdots & \vdots \\
a_{i_ri_1} & \cdots & a_{i_ri_r} & m_{i_r} \\
a_{ji_1} & \cdots & a_{ji_r} & m_{j}
\end{vmatrix}
.
\end{equation*}

\begin{remark}
\label{rem:cramer}
If the determinants $\Delta_I$ are nonzero, the systems~\eqref{eq:vi} are 
exactly determined.  Denoting the solution of~\eqref{eq:vi} by $\mathbf u_I = 
(u_{I1}, \dots, u_{IN})$ as before, for any $I$ we have
\begin{equation}
\label{eq:ufu}
u_{\mathsf CIi} =
\frac{
\Delta_{I\setminus\{i\}, i}
}{
\Delta_{I}
}
,
\quad
f_j(\mathbf u_{\mathsf CI}) =
\frac{
\Delta_{I, j}
}{
\Delta_{I}
}
,
\end{equation}
where $\mathsf CI = \{1, \dots, N\} \setminus I$.  Thus, in the case 
of admissible coefficients the values of $\mathbf u_I$ and of $f_j(\mathbf 
u_I)$ are nonnegative and bounded by a constant depending only on $\kappa$, 
but not on the particular choice of coefficients.  Moreover, if $j \notin I$, 
then $u_{Ij}$ are bounded away from zero by a constant depending on $\kappa$, 
but not on the coefficients.  If $i \in I$, the same is true for 
$f_i(\mathbf u_I)$.
\end{remark}

We want to geometrically interprete the positivity of $\Delta_I$ and 
$\Delta_{I,j}$, involved in Assumptions~\ref{a:det-a} and~\ref{a:det-am}. To 
this end, consider the system of linear inequalities in $\mathbb R^N$:
\begin{equation}
\label{eq:ineqs}
\left\{
\begin{array}{l}
f_i \ge 0 \quad (i = 1, \dots, N),
\\
u_i \ge 0 \quad (i = 1, \dots, N).
\end{array}
\right.
\end{equation}

\begin{proposition}
\label{pr:cuboid}
Suppose that $\Delta_I \ne 0$ for any $I \subset \{1, \dots, N\}$.  Then 
$\Delta_I > 0$ and $\Delta_{I,j} > 0$ for any $ I \subset \{1, \dots, N\}$ and 
$j \notin I$ if and only if the solution set of~\eqref{eq:ineqs} is a polytope 
with vertices $\{\mathbf u_I \colon I \subset \{1, \dots, N\}\}$ 
combinatorially isomorphic to a cube.
\end{proposition}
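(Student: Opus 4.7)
The plan is to prove the equivalence via two separate arguments, both keyed to the Cramer's rule formulas \eqref{eq:ufu}. Write $P := \{\mathbf u \in \mathbb R^N : u_i \ge 0,\, f_i(\mathbf u) \ge 0 \;\forall i\}$ for the solution set of \eqref{eq:ineqs}.

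The easy direction is that the combinatorial cube structure forces positivity of the determinants. In a cube the opposite facets $\{u_i = 0\}$ and $\{f_i = 0\}$ are disjoint, so each vertex $\mathbf u_I \in P$ satisfies $u_{Ii} > 0$ for $i \notin I$ and $f_j(\mathbf u_I) > 0$ for $j \in I$. Read through \eqref{eq:ufu}, this forces $\Delta_{J\setminus\{i\}, i}$ (for $i \in J$) and $\Delta_{J, j}$ (for $j \notin J$) to share the sign of $\Delta_J$. Combined with $\Delta_\varnothing = 1 > 0$, a routine induction on $|J|$ yields $\Delta_J > 0$ and $\Delta_{J, j} > 0$ throughout.

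For the converse, I proceed by induction on $N$, the case $N = 1$ being trivial. Assume positivity. First, \eqref{eq:ufu} places each $\mathbf u_I \in P$ with all non-forced coordinates and values strictly positive. Second, $P$ is bounded: positivity of the principal minors $\Delta_I$ makes $A$ a $P$-matrix, which rules out nonzero $\mathbf v \ge 0$ with $A\mathbf v \le 0$ and so trivializes the recession cone. The key step is then to show $u_k + f_k > 0$ on $P$ for every $k$, ensuring that no point of $P$ sits on two opposite facets at once. I would establish this by applying the inductive hypothesis to the reduced $(N-1)$-dimensional system with matrix $(a_{jj'})_{j, j' \ne k}$ and vector $(m_j)_{j \ne k}$, whose subdeterminants are among the positive $\Delta$'s of the original: the resulting $(N-1)$-cube $\tilde P_k$ contains $P \cap \{u_k = 0\}$, and the affine function $\bar f_k(\mathbf u) := m_k - \sum_{j \ne k} a_{kj} u_j$ (which coincides with $f_k$ on $\{u_k = 0\}$) takes the strictly positive value $\Delta_{\mathsf C(I' \cup \{k\}), k}/\Delta_{\mathsf C(I' \cup \{k\})}$ at each vertex $\tilde{\mathbf u}_{I'}$ of $\tilde P_k$ (obtained from $\mathbf u_{I' \cup \{k\}}$ by deleting the $k$-th coordinate), so affinity extends $\bar f_k > 0$ to all of $\tilde P_k$.

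Once this pairwise disjointness is secured, at each vertex of $P$ the $N$ linearly independent active constraints must come from $N$ distinct pairs $\{u_i = 0, f_i = 0\}$, giving a bijection between the vertices of $P$ and the subsets $I \subseteq \{1, \dots, N\}$. A face defined by active constraints $\{u_i = 0 : i \in I_1\} \cup \{f_j = 0 : j \in J_1\}$ then automatically has $I_1 \cap J_1 = \varnothing$, dimension $N - |I_1| - |J_1|$, and contains exactly the $2^{N - |I_1| - |J_1|}$ vertices $\mathbf u_I$ with $I_1 \subseteq I \subseteq \mathsf C J_1$ — precisely the face lattice of an $N$-cube. The main obstacle is the key step: correctly setting up the reduced system, checking that the relevant subdeterminants remain positive, and identifying $\bar f_k(\tilde{\mathbf u}_{I'})$ with $f_k(\mathbf u_{I' \cup \{k\}})$ via \eqref{eq:ufu}.
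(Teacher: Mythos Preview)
Your proposal is correct and runs parallel to the paper's proof in its essentials: both argue by induction on $N$, both use the Cramer formulas \eqref{eq:ufu} to place the $\mathbf u_I$ on the correct facets with the correct strict signs, both trivialize the recession cone via positivity of the principal minors, and both deduce the easy direction by propagating signs from the disjointness of opposite facets.

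The organizational difference lies in the hard direction. The paper works facet by facet: it explicitly identifies $P \cap \{u_N = 0\}$ with the inductive $(N-1)$-cube, checks that all $2N$ hyperplanes cut out genuine facets, enumerates the vertices, and then writes down the isomorphism to the cube. You instead isolate a single structural fact, $u_k + f_k > 0$ on $P$, by evaluating $f_k$ at the vertices of the inductive cube $\tilde P_k \supseteq P \cap \{u_k = 0\}$, and then let the face lattice fall out from the resulting simplicity of $P$. Your route is cleaner and makes the cube combinatorics more transparent; the paper's route is more hands-on and avoids invoking the $P$-matrix characterization or general facts about simple polytopes (its Step~2 is in effect a direct proof of the sign-reversal property you cite). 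One point worth making explicit in your write-up: state that $P$ is simple (exactly $N$ active constraints at each vertex), since this is what lets you read the face lattice off the vertex--facet incidences; it follows at once from your disjointness step together with the hypothesis $\Delta_I \neq 0$.
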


One corollary of Proposition~\ref{pr:cuboid} is that in the case of 
admissible coefficients no vertex (and hence, no point whatsoever) of the 
polytope~\eqref{eq:ineqs} satisfies any of the the equations $u_i = 0 = f_i$.  
A strengthened version of this observation stated in the following lemma plays 
a crucial role in our proof.

\begin{lemma}
\label{lem:obs}
There exists $\sigma = \sigma(\kappa)$ such that if for some admissible coefficients, some index $j$, and some $\mathbf u = (u_1, \dots, u_N) \ge 0$ we have
$u_j \le \sigma$ and 
$f_j(\mathbf u) \le \sigma $, then
\begin{equation}
\label{eq:obs}
\min_{i} f_i(\mathbf u)
\le - \sigma
.
\end{equation}
\end{lemma}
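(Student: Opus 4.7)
The plan is to argue by a compactness-contradiction argument, reducing the claim to the geometric fact, implicit in Proposition~\ref{pr:cuboid}, that the ``opposite'' facets $\{u_j = 0\}$ and $\{f_j = 0\}$ of the cuboid polytope do not meet.

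Suppose no such $\sigma$ exists. Then for each $n \in \mathbb{N}$ there exist admissible coefficients $(A_n, \mathbf{m}_n)$ with the same $\kappa$, an index $j_n \in \{1,\dots,N\}$, and $\mathbf{u}_n \ge 0$ such that
\[
u_{n, j_n} \le \tfrac{1}{n}, \qquad f_{n, j_n}(\mathbf{u}_n) \le \tfrac{1}{n}, \qquad f_{n, i}(\mathbf{u}_n) \ge -\tfrac{1}{n} \quad \text{for all } i.
\]
Since $j_n$ takes only finitely many values and Assumption~\ref{a:bounded} forces $(A_n, \mathbf{m}_n)$ to be bounded, I may pass to a subsequence along which $j_n \equiv j$ is constant and $(A_n, \mathbf{m}_n) \to (A, \mathbf{m})$. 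By continuity of the determinants $\Delta_I$ and $\Delta_{I,j}$, the limit is still admissible with the same $\kappa$.

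The first substantive step is uniform boundedness of $\{\mathbf{u}_n\}$. Suppose instead $\|\mathbf{u}_n\| \to \infty$ along a subsequence; normalize $\mathbf{v}_n = \mathbf{u}_n / \|\mathbf{u}_n\|$ and extract a further subsequence with $\mathbf{v}_n \to \mathbf{v}$, a unit vector satisfying $\mathbf{v} \ge 0$. Dividing the inequality $m_{n,i} - (A_n \mathbf{u}_n)_i \ge -1/n$ by $\|\mathbf{u}_n\|$ and sending $n \to \infty$ yields $(A\mathbf{v})_i \le 0$ for every $i$. Consequently, for any $t>0$,
\[
f_i(t\mathbf{v}) = m_i - t (A\mathbf{v})_i \ge m_i \ge \kappa > 0,
\]
where the final bound comes from Assumption~\ref{a:det-am} with $I = \varnothing$. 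Hence the entire ray $\{t\mathbf{v} : t \ge 0\}$ lies inside the polytope $P = \{\mathbf{u} \ge 0,\, f(\mathbf{u}) \ge 0\}$ associated with the limiting coefficients, contradicting the boundedness of $P$, which is combinatorially a cube by Proposition~\ref{pr:cuboid}.

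With boundedness in hand, I extract another subsequence so that $\mathbf{u}_n \to \mathbf{u}_\infty$. Passing to the limit in the three defining inequalities gives $u_{\infty,j} = 0$, $f_j(\mathbf{u}_\infty) = 0$, and $f_i(\mathbf{u}_\infty) \ge 0$ for all $i$, so $\mathbf{u}_\infty$ lies simultaneously on the facets $\{u_j = 0\} \cap P$ and $\{f_j = 0\} \cap P$. To reach a contradiction I use Remark~\ref{rem:cramer}: a vertex $\mathbf{u}_I$ of $P$ lies on $\{u_j = 0\}$ exactly when $j \in I$, and on $\{f_j = 0\}$ exactly when $j \notin I$ (otherwise $f_j(\mathbf{u}_I)$ is bounded away from $0$). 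These two facets therefore share no vertex; since any nonempty intersection of faces of a convex polytope is itself a face, and every face contains at least one vertex, the two facets must be disjoint, producing the sought contradiction.

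I expect the uniform boundedness step for $\{\mathbf{u}_n\}$ to be the main subtlety, since the lower bound $f_i(\mathbf{u}_n) \ge -1/n$ does not by itself yield coercivity; the argument must route through the geometric fact that the cuboid $P$ is bounded and must be uniform in $(A_n, \mathbf{m}_n)$. The combinatorial disjointness of the opposing facets, by contrast, is essentially a bookkeeping consequence of Proposition~\ref{pr:cuboid} and Remark~\ref{rem:cramer}.
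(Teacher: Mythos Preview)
Your proof is correct and takes a genuinely different route from the paper's.

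The paper argues directly. For a fixed admissible system it first shows that the slab $P\cap\{0\le u_j\le c\}$ lies in the half-space $\{f_j\ge c'\}$ for constants $c,c'$ depending only on $\kappa$ (using Remark~\ref{rem:cramer} on the vertex values). It then invokes the Minkowski--Farkas theorem to write $f_j-c'$ as a nonnegative combination $\sum_{i\ne j}\alpha_i f_i+\sum_i\beta_i u_i+\gamma(c-u_j)+\delta$, proves the $\alpha_i$ are bounded by a constant $C=C(\kappa)$ by evaluating at the origin, and from this extracts that $u_j\le c$ and $f_j\le c'/2$ force $\sum_{f_i<0}f_i\le -c'/(2C)$, hence some $f_i\le -c'/(2CN)$.

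Your compactness-contradiction argument is shorter and conceptually cleaner: it reduces everything to the single geometric fact, already encoded in Proposition~\ref{pr:cuboid} and Remark~\ref{rem:cramer}, that the opposite facets $\{u_j=0\}\cap P$ and $\{f_j=0\}\cap P$ share no vertex and hence are disjoint. The boundedness step is handled correctly by normalizing and passing to the limit to obtain a nonzero $\mathbf v\ge 0$ with $A\mathbf v\le 0$, which gives an unbounded ray in $P$ contradicting Proposition~\ref{pr:cuboid}; uniformity in the coefficients is preserved because the limiting system is still admissible with the same $\kappa$. The trade-off is that the paper's argument is in principle quantitative (one can read off $\sigma$ from $c,c',C,N$), whereas yours, being soft, is not---but since the statement only asks for existence of $\sigma(\kappa)$, nothing is lost.
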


Now we introduce a few auxiliary functions.  Fix $p \ge 1$ and set
\begin{gather*}
g = \sum_{i = 1}^N | f_i |^p
,
\\
v =
\begin{cases}
\frac 1g \sum_{i = 1}^N u_i | f_i |^p & \text{ if } g \ne 0
\\
\text{whatever between $\min_i u_i$ and $\max_i u_i$} & \text{ if } g = 0
.
\end{cases}
\end{gather*}
Observe that $g$ and $v$ are nonnegative on $\mathbb R^N_+$ and
\begin{gather}
g = 0 \Leftrightarrow f_i = 0 \ (i = 1, ..., N) \Leftrightarrow \mathbf u = 
\mathbf{u}_{\varnothing}
\label{eq:zerog}
,
\\
v = 0 \Leftrightarrow \mathbf u = \mathbf{u}_I \quad (I \ne \varnothing)
.
\label{eq:zerov}
\end{gather}
Also note the identity
\begin{equation}
\label{eq:vg}
vg = \sum_{i = 1}^N u_i | f_i |^p
\end{equation}
and the inequality
\begin{equation}
\label{eq:uvu}
\min_i u_i \le v \le \max_i u_i
.
\end{equation}

Developing observation~\eqref{eq:zerov}, the following lemma and its 
corollaries state that $v(\mathbf u)$ is small only in the neighbourhood of 
the set $\{\mathbf{u}_I \colon I \ne \varnothing\}$.  This allows to use the 
function $v$ to prove convergence of the form~\eqref{eq:varconvergence} and 
\eqref{eq:kmpv3}--\eqref{eq:kmpv4}.

\begin{lemma}
\label{lem:gstab}
We have
\begin{equation}
\label{eq:gstab}
\lim_{\substack{v(\mathbf u) \to 0\\
\mathbf u \in \mathbb R^N_+
}} \min_{I \ne \varnothing} | \mathbf{u} - \mathbf{u}_I |  = 0,
\end{equation}
where the limit is uniform with respect to admissible coefficients.
\end{lemma}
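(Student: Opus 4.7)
The plan is to argue by contradiction and compactness. Suppose the lemma fails: there exist $\epsilon_0 > 0$, admissible coefficients $(A_n, \mathbf{m}_n)$, and $\mathbf{u}_n \in \mathbb R^N_+$ with $v(\mathbf u_n) \to 0$ but $\min_{I \ne \varnothing}|\mathbf u_n - \mathbf u_I^n| \ge \epsilon_0$, where $\mathbf u_I^n$ denotes the solution of \eqref{eq:vi} corresponding to $(A_n,\mathbf m_n)$. Assumption~\ref{a:bounded} bounds the coefficients uniformly, so after extraction $(A_n,\mathbf m_n) \to (A^*, \mathbf m^*)$; since Assumptions~\ref{a:bounded}--\ref{a:det-am} are closed inequalities, the limit is again admissible with the same $\kappa$, and Cramer's formulas \eqref{eq:ufu} give $\mathbf u_I^n \to \mathbf u_I^*$ for every $I$. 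The whole argument then hinges on showing that $\mathbf u_n$ stays bounded.

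I would prove boundedness by a blow-up. Assuming $|\mathbf u_n| \to \infty$, extract a further subsequence along which $\mathbf p = \lim \mathbf u_n/|\mathbf u_n|$ exists, with $\mathbf p \ge 0$ and $|\mathbf p|=1$; then $f_{in}/|\mathbf u_n| \to -(A^*\mathbf p)_i$, and $A^*\mathbf p \ne 0$ because $\det A^* \ge \kappa$ follows from Assumption~\ref{a:det-a} applied to $I = \{1,\dots,N\}$. Set $L = \{i : p_i > 0\}$ and $K = \{i : (A^*\mathbf p)_i = 0\}$. If $L \not\subset K$, pick $i_0 \in L \setminus K$; using $g_n/|\mathbf u_n|^p \to \sum_k |(A^*\mathbf p)_k|^p > 0$, the bound $v_n \ge u_{i_0 n}|f_{i_0 n}|^p/g_n$ yields $v_n \gtrsim p_{i_0}|(A^*\mathbf p)_{i_0}|^p \, |\mathbf u_n| \to \infty$, contradicting $v_n \to 0$. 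If instead $L \subset K$, then $\mathbf p$ is supported on $L$ and $(A^*\mathbf p)_i = 0$ for every $i \in L$, so the principal submatrix $A^*_{L,L}$ annihilates the strictly positive vector $\mathbf p_L \ne 0$; thus $\Delta_L = 0$, contradicting Assumption~\ref{a:det-a}.

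With boundedness established, extract a subsequence $\mathbf u_n \to \mathbf u^* \in \mathbb R^N_+$. I would rule out $\mathbf u^* = \mathbf u_\varnothing^*$: by Remark~\ref{rem:cramer} every component of $\mathbf u_\varnothing^*$ is bounded below by a positive constant depending only on $\kappa$, whereas \eqref{eq:uvu} gives $\min_i u_{in} \le v_n \to 0$. Hence $g(\mathbf u^*) > 0$ by \eqref{eq:zerog}, so $v$ is continuous at $\mathbf u^*$ and $v(\mathbf u^*) = 0$; equivalence \eqref{eq:zerov} then forces $\mathbf u^* = \mathbf u_I^*$ for some $I \ne \varnothing$, contradicting $|\mathbf u^* - \mathbf u_I^*| \ge \epsilon_0$.

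The main obstacle is the blow-up step. The delicate point is that the rescaling $g_n/|\mathbf u_n|^p$ sets a scale against which $v_n$ must diverge unless the limit direction $\mathbf p$ has support contained in the kernel of a principal block of $A^*$, and precisely this possibility is killed by the positivity of every principal minor $\Delta_I$ demanded by Assumption~\ref{a:det-a}; the uniformity in the coefficients then comes for free from the compactness built into the contradiction argument.
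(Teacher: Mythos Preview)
Your proof is correct and follows essentially the same route as the paper: a contradiction argument using compactness of admissible coefficients, boundedness of the sequence, and continuity of $v$ away from $\mathbf u_\varnothing$. The only organizational difference is that the paper isolates the boundedness step as a separate lemma (Lemma~\ref{lem:limv}, proving $v(\mathbf u)\to\infty$ as $|\mathbf u|\to\infty$ uniformly in admissible coefficients), while you inline the identical blow-up argument; your dichotomy $L\subset K$ versus $L\not\subset K$ is precisely the paper's observation that the leading coefficient $\sum_i b_i|f'_{i*}(\mathbf b)|^p$ cannot vanish without forcing a principal minor $\Delta_L$ to be zero.
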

\begin{corollary}
\label{cor:genlimv}
There exists $\sigma > 0$ such that for any $\varepsilon > 0$ there
exists $\delta = \delta(\varepsilon, \kappa) > 0$ such that if $\mathbf u \ge 
0$ and $v(\mathbf{u}) < \delta$ for some admissible coefficients, then there 
exists $I \ne \varnothing$ such that
\begin{gather}
\sum_{i \in I} u_i + \sum_{j \notin I} |f_{j}(\mathbf{u})| < \varepsilon
,
\label{eq:genlimv1}
\\
f_{i}(\mathbf{u}) > \sigma \quad (i \in I)
.
\label{eq:genlimv2}
\end{gather}
\end{corollary}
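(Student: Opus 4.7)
The plan is to derive Corollary~\ref{cor:genlimv} directly from Lemma~\ref{lem:gstab} together with the quantitative bounds on $\mathbf u_I$ and $f_j(\mathbf u_I)$ recorded in Remark~\ref{rem:cramer}. The only nontrivial observation is how to choose $\sigma$ before seeing $\varepsilon$; once $\sigma$ is fixed in terms of $\kappa$ only, the rest is a straightforward continuity argument applied to the affine map $\mathbf u \mapsto \mathbf f(\mathbf u)$, whose coefficients are uniformly bounded by Assumption~\ref{a:bounded}.

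First I would set $\sigma$. By Remark~\ref{rem:cramer}, for admissible coefficients and any $I \subset \{1,\dots,N\}$, one has $f_i(\mathbf u_I) \ge c_0(\kappa) > 0$ for every $i \in I$. I take $\sigma = c_0(\kappa)/2$; this depends only on $\kappa$, as required. Next, given $\varepsilon > 0$, I choose an auxiliary parameter $\eta = \eta(\varepsilon, \kappa) > 0$ small enough so that simultaneously $(N + N^2/\kappa)\,\eta < \varepsilon$ and $(N/\kappa)\,\eta < c_0(\kappa)/2$; here $N/\kappa$ is an upper bound for the norm of $A$ given by Assumption~\ref{a:bounded}, so that $\mathbf u \mapsto \mathbf f(\mathbf u)$ is uniformly Lipschitz with constant depending only on $\kappa$.

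Now I apply Lemma~\ref{lem:gstab}: since the limit there is uniform in admissible coefficients, I may select $\delta = \delta(\varepsilon, \kappa) > 0$ so that $v(\mathbf u) < \delta$ and $\mathbf u \ge 0$ force $\min_{I \ne \varnothing} |\mathbf u - \mathbf u_I| < \eta$. Fix such an $I$. For $i \in I$ we have $u_{Ii} = 0$, hence $0 \le u_i \le |\mathbf u - \mathbf u_I| < \eta$; for $j \notin I$ we have $f_j(\mathbf u_I) = 0$, hence $|f_j(\mathbf u)| = |f_j(\mathbf u) - f_j(\mathbf u_I)| \le (N/\kappa)\,\eta$. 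Summing these yields \eqref{eq:genlimv1} by the choice of $\eta$. For $i \in I$, the same Lipschitz estimate gives
\begin{equation*}
f_i(\mathbf u) \ge f_i(\mathbf u_I) - (N/\kappa)\,\eta \ge c_0(\kappa) - c_0(\kappa)/2 = \sigma,
\end{equation*}
which is \eqref{eq:genlimv2}.

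The main (and essentially only) point requiring care is making sure the constant $\sigma$ is universal — declared before $\varepsilon$ is seen and depending only on $\kappa$ — and this is what the uniform lower bound on $f_i(\mathbf u_I)$ from Remark~\ref{rem:cramer} provides. Everything else is routine Lipschitz continuity applied on top of the qualitative convergence already furnished by Lemma~\ref{lem:gstab}.
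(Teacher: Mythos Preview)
Your proof is correct and follows essentially the same approach as the paper: invoke Lemma~\ref{lem:gstab} to force $|\mathbf u - \mathbf u_I|$ small for some $I\ne\varnothing$, then use the uniform Lipschitz bound on $\mathbf u\mapsto\mathbf f(\mathbf u)$ from Assumption~\ref{a:bounded} together with the lower bound $f_i(\mathbf u_I)\ge c_0(\kappa)$ from Remark~\ref{rem:cramer} to obtain both \eqref{eq:genlimv1} and \eqref{eq:genlimv2}, with $\sigma=c_0(\kappa)/2$. The only cosmetic difference is that the paper packages the componentwise estimates into an equivalent norm $|\mathbf u|_I=\sum_{i\in I}|u_i|+\sum_{j\notin I}|f_j'(\mathbf u)|$, whereas you work directly with the individual components; the content is identical. (One trivial remark: since your choice of $\eta$ gives $(N/\kappa)\eta$ strictly less than $c_0/2$, your last display actually yields $f_i(\mathbf u)>\sigma$, matching the strict inequality in \eqref{eq:genlimv2}.)
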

\begin{corollary}
\label{cor:gstab}
We have
\begin{equation}
\label{eq:cor-gstab}
\lim_{\substack{v(\mathbf u) \to 0\\
\mathbf u \in \mathbb R^N_+
}} \min_{I \ne \varnothing}
\left(\sum_{i \in I} u_i + \sum_{j \notin I} |f_j| \right)
= 0,
\end{equation}
where the limit is uniform with respect to admissible coefficients.
\end{corollary}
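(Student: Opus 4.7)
The plan is direct: this corollary is essentially just the preceding Corollary~\ref{cor:genlimv} repackaged as a uniform limit statement. Unwinding the notation $\lim_{v(\mathbf u) \to 0,\ \mathbf u \in \mathbb R^N_+}$ together with the uniformity clause, the goal is to show that for every $\varepsilon > 0$ there exists $\delta > 0$, depending only on $\varepsilon$ and $\kappa$ (and not on the particular admissible coefficients), such that whenever $\mathbf u \in \mathbb R^N_+$ satisfies $v(\mathbf u) < \delta$, one has
\[
\min_{I \ne \varnothing}\Bigl(\sum_{i \in I} u_i + \sum_{j \notin I} |f_j(\mathbf u)|\Bigr) < \varepsilon.
\]

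Given $\varepsilon > 0$, I would simply take the $\delta(\varepsilon, \kappa)$ supplied by Corollary~\ref{cor:genlimv}. Whenever $v(\mathbf u) < \delta$, that corollary produces a specific nonempty index set $I$ (depending on $\mathbf u$ and the coefficients) satisfying \eqref{eq:genlimv1}. Since this particular $I$ is among those being minimized over in the statement of Corollary~\ref{cor:gstab}, the minimum is bounded above by the same quantity, yielding the desired inequality. Uniformity in the admissible class is automatic because $\delta$ depends only on $\varepsilon$ and $\kappa$, not on the specific coefficients $A$, $\mathbf m$.

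There is no real obstacle here: all the substantive work---extracting a limiting vertex $\mathbf u_I$ uniformly in the admissible coefficients, and translating the vector closeness $|\mathbf u - \mathbf u_I| \to 0$ from Lemma~\ref{lem:gstab} into the componentwise bounds on the $u_i$ with $i \in I$ and on the $|f_j|$ with $j \notin I$ via continuity and the identities $u_{Ii}=0$ ($i\in I$), $f_j(\mathbf u_I)=0$ ($j\notin I$)---is already encapsulated in Lemma~\ref{lem:gstab} and Corollary~\ref{cor:genlimv}. The only point worth pausing over is that the existential index set produced by Corollary~\ref{cor:genlimv} is indeed admissible in the minimum appearing in Corollary~\ref{cor:gstab}, which is immediate since both are restricted to $I \ne \varnothing$.
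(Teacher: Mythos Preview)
Your proposal is correct and matches the paper's implicit reasoning: the paper gives no separate proof of Corollary~\ref{cor:gstab}, treating it as an immediate consequence of Corollary~\ref{cor:genlimv}, exactly as you derive it.
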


As $v$ vanishes only at the points $\mathbf u_I$ ($I \ne \varnothing$), it 
follows from Remark~\ref{rem:cramer} that $v = 0$ implies $f_i > 0$ for some 
$i$.  The following corollary of Lemma~\ref{lem:gstab} extends this 
observation to the case of small $v$.

\begin{corollary}
\label{cor:sigma1}
There exist $\varepsilon_0 > 0$ and $\sigma > 0$ depending on $\kappa$, but 
not on admissible coefficients, such that if $v(\mathbf{u}) <
\varepsilon_0$, there exists $i$ such that $f_{i} (\mathbf{u})> \sigma$.
\end{corollary}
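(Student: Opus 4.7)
The plan is to derive the corollary by combining Lemma~\ref{lem:gstab} with the uniform lower bound on $f_i(\mathbf{u}_I)$ for $i \in I$ guaranteed by Remark~\ref{rem:cramer}, using the fact that each $f_i$ is Lipschitz with a constant depending only on $\kappa$.

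First, I would record the two ingredients. By Remark~\ref{rem:cramer}, for every admissible coefficient tuple, every nonempty $I \subset \{1,\dots,N\}$, and every $i \in I$ one has $f_i(\mathbf u_I) \ge \sigma_0$ for a constant $\sigma_0 = \sigma_0(\kappa) > 0$ independent of the coefficients. On the other hand, Assumption~\ref{a:bounded} bounds $|a_{ij}| \le 1/\kappa$, so the affine functions $f_i(\mathbf u) = m_i - \sum_j a_{ij} u_j$ are $L$-Lipschitz on $\mathbb R^N$ with $L = L(\kappa)$ independent of the coefficients.

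Next I would invoke Lemma~\ref{lem:gstab}, which asserts that $\min_{I \ne \varnothing}|\mathbf u - \mathbf u_I| \to 0$ as $v(\mathbf u) \to 0$, uniformly in the admissible coefficients. Choose $\eta := \sigma_0/(2L)$ and let $\varepsilon_0 = \varepsilon_0(\kappa)$ be small enough that $v(\mathbf u) < \varepsilon_0$ forces $|\mathbf u - \mathbf u_I| < \eta$ for some nonempty $I$; such an $\varepsilon_0$ exists by the uniformity in Lemma~\ref{lem:gstab}. Fixing any $i \in I$, the Lipschitz estimate gives
\begin{equation*}
f_i(\mathbf u) \ge f_i(\mathbf u_I) - L\,|\mathbf u - \mathbf u_I| \ge \sigma_0 - L\eta = \tfrac{\sigma_0}{2},
\end{equation*}
so setting $\sigma := \sigma_0/2$ yields the desired conclusion.

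There is no real obstacle here: everything reduces to Lemma~\ref{lem:gstab} together with the coefficient-independent bounds on $f_i(\mathbf u_I)$ and on $\mathrm{Lip}(f_i)$. Alternatively, one could bypass the Lipschitz argument altogether and read the statement off Corollary~\ref{cor:genlimv} applied with any fixed $\varepsilon$ (for instance $\varepsilon = 1$), since \eqref{eq:genlimv2} already provides the required index $i \in I$ and the uniform lower bound $f_i(\mathbf u) > \sigma$.
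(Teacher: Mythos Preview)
Your proof is correct and matches the paper's intended approach. The paper does not spell out a proof of this corollary; it simply presents it as a consequence of Lemma~\ref{lem:gstab} together with the observation (via Remark~\ref{rem:cramer}) that $f_i(\mathbf u_I)>0$ for $i\in I$, which is exactly the combination you use, and your alternative route through Corollary~\ref{cor:genlimv} is equally valid since \eqref{eq:genlimv2} is proved in the same way.
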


\section{Proof of the theorems}

\subsection{Preliminaries}
\label{sec:preliminaries}

We prove Theorem~\ref{th:kmpv-trunc}, and Theorem~\ref{th:kmpv} follows.

Assume that the theorem is not true and there exist a sequence 
$\{\varepsilon_n\}$, sequences of coefficients $\{A_n\} \subset C^1(\overline 
\Omega; M_N(\mathbb R))$ and $\{\mathbf m_n\} \subset C^1(\overline 
\Omega;\mathbb R^N)$ satisfying Assumptions~\ref{a:bounded}--\ref{a:det-am} 
with some $\kappa > 0$ and a sequence $\{(u_{1n}, \dots, u_{Nn})\} \subset 
\mathbf U$ such that $\varepsilon_n \to 0$ and
\begin{equation}
\label{eq:p01}
\int_\Omega \sum_{i = 1}^N \tilde u_{in} (|f_{in}|^p + |\nabla f_{in}|^p)
\, \mathrm dx
\le
\varepsilon_n^2
\int_\Omega
\sum_{i = 1}^N
|f_{in}|^p
\, \mathrm dx
,
\end{equation}
where $f_{in}$ corresponds to $A_n$ and $\mathbf m_n$.

We claim that without loss of generality the functions $\{\mathbf u_n\}$ can 
be assumed to be smooth.  Indeed, we always can assume
that $\mathbf U$ is open in the relative topology of the cone of nonnegative 
functions in $W^1_p$, otherwise we can replace it by a small enlargement of $\mathbf U$ without affecting the hypothesis of the theorem.  Then by Meyers-Serrin theorem we can approximate $\mathbf u_{n}$ by smooth 
functions from $\mathbf U$ such that \eqref{eq:p01} holds with 
$\varepsilon_n^2$ replaced by $2\varepsilon_n^2$.


Denote
\begin{equation*}
  \tilde v_n(x) =
  \begin{cases}
    \frac{1}{g_{n}(x)} \sum_{i = 1}^N \tilde u_{in}(x) | f_{in}(x) |^p
    & \text{ if } g_n(x) \ne 0
    \\
    v_n(x) & \text{ if } g_{n}(x)= 0
    .
  \end{cases}
\end{equation*}

It is obvious that
\begin{equation*}
  u_{in}(x) \le \varepsilon_n
  \quad
  \text{if and only if}
  \quad
  \tilde u_{in}(x) \le \varepsilon_n
  .
\end{equation*}

It follows from Lemma~\ref{lem:gstab} and~\eqref{eq:7.1} that there
exists $\delta > 0$ independent of $n$ and $x$ such that if $v_n(x) <
\delta$, then for any $i$ we have $u_{in}(x) < M$ and, consequently,
$u_{in}(x) = \tilde u_{in}(x)$ and $v_n(x) = \tilde v(x)$.  In
particular, there is no loss of generality in assuming that
\begin{equation*}
  v_n(x) \le \varepsilon_n
  \quad
  \text{if and only if}
  \quad
  \tilde v_n(x) \le \varepsilon_n
  .
\end{equation*}

Write~\eqref{eq:p01} in the form
\begin{equation*}
\int_\Omega
\tilde v_n g_n \, \mathrm dx
+
\int_\Omega \sum_{i = 1}^N \tilde u_{in} |\nabla f_{in}|^p
\, \mathrm dx
\le
\varepsilon_n^2
\int_\Omega
g_n
\, \mathrm dx
,
\end{equation*}
whence
\begin{multline*}
\int_{[v_n \le \varepsilon_n]}
\tilde v_n g_n \, \mathrm dx
+
\varepsilon_n
\int_{[v_n > \varepsilon_n]}
g_n \, \mathrm dx
+
\int_\Omega \sum_{i = 1}^N \tilde u_{in} |\nabla f_{in}|^p
\, \mathrm dx
\\ \le
\varepsilon_n^2
\left(
\int_{[v_n \le \varepsilon_n]}
g_n
\, \mathrm dx
+
\int_{[v_n > \varepsilon_n]}
g_n
\, \mathrm dx
\right)
.
\end{multline*}
Dropping a nonnegative term on the left-hand side and dividing both sides by 
$\varepsilon_n$, we obtain
\begin{equation}
\label{eq:p02}
\frac{1}{\varepsilon_n}
\int_\Omega \sum_{i = 1}^N \tilde u_{in} |\nabla f_{in}|^p
\, \mathrm dx
\le
-(1 - \varepsilon_n)
\int_{[v_n > \varepsilon_n]}
g_n \, \mathrm dx
+
\varepsilon_n
\int_{[v_n \le \varepsilon_n]}
g_n
\, \mathrm dx
.
\end{equation}
Lemma~\ref{lem:gstab} implies that if $v$ is bounded, so is $\mathbf u$, so 
there exists $M > 0$ such that $g \le M$ whenever $v < 1$.  Without loss of 
generality, $\varepsilon_n < 1$, so from~\eqref{eq:p02} we conclude
\begin{equation}
\label{eq:p03}
\frac{1}{\varepsilon_n}
\int_\Omega \sum_{i = 1}^N \tilde u_{in} |\nabla f_{in}|^p
\, \mathrm dx
\le
M \varepsilon_n
|[ v_n \le \varepsilon_n ]|
.
\end{equation}

Moreover, it follows from~\eqref{eq:p02} that the integral
\begin{equation*}
\int_{[v_n > \varepsilon_n]}
g_n \, \mathrm dx
\end{equation*}
is bounded uniformly with respect to $n$.  Hence the sequence $\{g_n\}$ is 
bounded in $L^1$ and $\{\mathbf u_n\}$ is bounded in $L^p$.  It remains to 
show that $\{\mathbf u_n\}$ satisfies \eqref{eq:kmpv3} and \eqref{eq:kmpv4} 
for some nonempty $I$ in order to obtain a contradiction.

\begin{lemma}
\label{lem:claim1}
Given $a > 0$,
\begin{equation}
\label{eq:claim1}
\lim_{n \to \infty}
|
[ v_n > \varepsilon_n ]
\cap
[ g_n > a ]
|
=0
.
\end{equation}
\end{lemma}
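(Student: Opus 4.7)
The plan is to deduce the claim directly from \eqref{eq:p02} and a Chebyshev-type estimate, so this lemma should be more or less a routine consequence of the bounds already extracted.

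First I would isolate the inequality $\int_\Omega \tilde v_n g_n\, \mathrm dx \le \varepsilon_n^2 \int_\Omega g_n\, \mathrm dx$ from \eqref{eq:p02} (by dropping the nonnegative gradient term). On the set $[v_n > \varepsilon_n]$, which by the assumed equivalence coincides with $[\tilde v_n > \varepsilon_n]$, the integrand $\tilde v_n g_n$ is bounded below by $\varepsilon_n g_n$. Therefore
\begin{equation*}
\varepsilon_n \int_{[v_n > \varepsilon_n]} g_n \, \mathrm dx
\le \int_\Omega \tilde v_n g_n\, \mathrm dx
\le \varepsilon_n^2 \int_\Omega g_n\, \mathrm dx,
\end{equation*}
and dividing by $\varepsilon_n$ gives $\int_{[v_n > \varepsilon_n]} g_n\, \mathrm dx \le \varepsilon_n \int_\Omega g_n\, \mathrm dx$.

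Next I would check that $\{g_n\}$ is bounded in $L^1(\Omega)$ uniformly in $n$. Splitting $\int_\Omega g_n\, \mathrm dx$ according to whether $v_n \le \varepsilon_n$ or $v_n > \varepsilon_n$, the contribution over $[v_n \le \varepsilon_n]$ is at most $M|\Omega|$ once $\varepsilon_n < 1$, by the observation (recorded just before \eqref{eq:p03}) that $v_n < 1$ forces $g_n \le M$ via Lemma~\ref{lem:gstab}. The contribution over $[v_n > \varepsilon_n]$ is at most $\varepsilon_n \int_\Omega g_n\, \mathrm dx$ by the previous step, so absorbing this term into the left-hand side for $\varepsilon_n \le 1/2$ yields $\int_\Omega g_n\, \mathrm dx \le 2M|\Omega|$.

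Combining the two bounds, $\int_{[v_n > \varepsilon_n]} g_n\, \mathrm dx \le 2M|\Omega|\,\varepsilon_n$, which tends to $0$. Finally, Chebyshev's inequality applied on the set $[v_n > \varepsilon_n]$ gives
\begin{equation*}
a \cdot |[v_n > \varepsilon_n] \cap [g_n > a]|
\le \int_{[v_n > \varepsilon_n] \cap [g_n > a]} g_n\, \mathrm dx
\le \int_{[v_n > \varepsilon_n]} g_n\, \mathrm dx \xrightarrow[n\to\infty]{} 0,
\end{equation*}
which is exactly \eqref{eq:claim1}. There is no serious obstacle here; the only potential subtlety is making sure that the equivalence $v_n > \varepsilon_n \iff \tilde v_n > \varepsilon_n$ (guaranteed in the preamble via Lemma~\ref{lem:gstab} and the choice of $M$) is invoked when passing from $\tilde v_n$ back to $v_n$, so that the two inequalities on the truncated and untruncated quantities can be chained without loss.
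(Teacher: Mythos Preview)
Your proof is correct and follows essentially the same route as the paper: a Chebyshev-type bound reduces the claim to showing that $\int_{[v_n>\varepsilon_n]} g_n\,\mathrm dx \to 0$, which is extracted from \eqref{eq:p01}/\eqref{eq:p02} together with the pointwise bound $g_n \le M$ on $[v_n\le\varepsilon_n]$. The only cosmetic differences are that the paper works directly from \eqref{eq:p02} and bounds $\int_{[v_n\le\varepsilon_n]} g_n$ by $M|[v_n\le\varepsilon_n]|$, whereas you go back to \eqref{eq:p01} and bound $\int_\Omega g_n$ via an absorption argument; both give the same conclusion.
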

\begin{proof}
We have:
\begin{equation}
\label{eq:claim101}
|
[ v_n > \varepsilon_n ]
\cap
[ g_n > a ]
|
\le
\frac 1a
\int_{[v_n > \varepsilon_n]\cap[g_n > a]}
g_n
\, \mathrm dx
\le
\frac 1a
\int_{[v_n > \varepsilon_n]}
g_n
\, \mathrm dx
.
\end{equation}
Inequality~\eqref{eq:p02} implies
\begin{equation*}
-(1 - \varepsilon_n)
\int_{[v_n > \varepsilon_n]}
g_n \, \mathrm dx
+
\varepsilon_n
\int_{[v_n \le \varepsilon_n]}
g_n
\, \mathrm dx
\ge 0
,
\end{equation*}
so we can estimate the last integral in~\eqref{eq:claim101} and 
obtain
\begin{equation*}
|
[ v_n > \varepsilon_n ]
\cap
[ g_n > a ]
|
\le
\frac{\varepsilon_n}{a(1 - \varepsilon_n)}
\int_{[v_n \le \varepsilon_n]}
g_n
\, \mathrm dx
\le
\frac{M\varepsilon_n |[ v_n \le \varepsilon_n ]|}{a(1 - \varepsilon_n)}
\to 0
\quad (n \to \infty)
\end{equation*}
and \eqref{eq:claim1} is proved.
\end{proof}

\begin{lemma}
\label{lem:claim2}
Given $a$, there exists $C_a$ such that for large $n$,
\begin{equation}
\label{eq:claim2}
| [ g_n > a  ] |
\le
C_a
| [ v_n \le \varepsilon_n  ] |
.
\end{equation}
\end{lemma}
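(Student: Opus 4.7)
The plan is to decompose the set $[g_n > a]$ according to whether $v_n \le \varepsilon_n$ holds or not, and treat the two pieces separately. On $[v_n \le \varepsilon_n]$ we have the trivial bound
$$|[v_n \le \varepsilon_n] \cap [g_n > a]| \le |[v_n \le \varepsilon_n]|,$$
which already has the right form. On the complement $[v_n > \varepsilon_n]$ I would recycle exactly the chain of inequalities used in the proof of Lemma~\ref{lem:claim1}: Chebyshev gives $|[v_n > \varepsilon_n] \cap [g_n > a]| \le a^{-1} \int_{[v_n > \varepsilon_n]} g_n\, \mathrm dx$, and this last integral is controlled via \eqref{eq:p02} (after dropping the nonnegative gradient term on the left) by $\frac{\varepsilon_n}{1 - \varepsilon_n} \int_{[v_n \le \varepsilon_n]} g_n\,\mathrm dx$, which in turn is bounded by $\frac{M \varepsilon_n}{1 - \varepsilon_n} |[v_n \le \varepsilon_n]|$ using the uniform pointwise bound $g_n \le M$ on $[v_n \le \varepsilon_n] \subset [v_n < 1]$ established just before \eqref{eq:p03}.

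Adding the two contributions yields
$$|[g_n > a]| \le \left(1 + \frac{M \varepsilon_n}{a(1 - \varepsilon_n)}\right) |[v_n \le \varepsilon_n]|.$$
Since $\varepsilon_n \to 0$, for all sufficiently large $n$ the prefactor is bounded, e.g., by $1 + 2M/a$, so one may take $C_a := 1 + 2M/a$, completing the proof.

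There is no real obstacle here: the analytic content (use of \eqref{eq:p02} and the uniform bound $g_n \le M$ on the small-$v_n$ region) was already extracted in the proof of Lemma~\ref{lem:claim1}. Lemma~\ref{lem:claim2} is essentially a repackaging of that same estimate, with the trivial bound handling the part of $[g_n > a]$ that lies inside $[v_n \le \varepsilon_n]$. The only thing to keep track of is that $\varepsilon_n$ must be less than, say, $1/2$, so that $(1 - \varepsilon_n)^{-1}$ stays bounded — hence the qualifier \emph{for large $n$} in the statement.
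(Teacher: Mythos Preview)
Your proof is correct and follows essentially the same approach as the paper: split $[g_n > a]$ according to whether $v_n \le \varepsilon_n$ or not, bound the first piece trivially by $|[v_n \le \varepsilon_n]|$, and bound the second piece by recycling the estimate $|[v_n > \varepsilon_n] \cap [g_n > a]| \le \frac{M\varepsilon_n}{a(1-\varepsilon_n)}|[v_n \le \varepsilon_n]|$ from the proof of Lemma~\ref{lem:claim1}, arriving at the identical prefactor $1 + \frac{M\varepsilon_n}{a(1-\varepsilon_n)}$.
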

\begin{proof}
Using the estimate
\begin{equation*}
|
[ v_n > \varepsilon_n ]
\cap
[ g_n > a ]
|
\le
\frac{M\varepsilon_n |[ v_n \le \varepsilon_n ]|}{a(1 - \varepsilon_n)}
\end{equation*}
obtained in the proof of Lemma~\ref{lem:claim1}, we get
\begin{equation*}
| [ g_n > a  ] |
\le
| [ v_n \le \varepsilon_n  ] |
+
| [ v_n > \varepsilon_n  ] \cap [ g_n > a  ] |
\le
\left(
1 +
\frac{M\varepsilon_n}{a(1-\varepsilon_n)}
\right)
| [ v_n \le \varepsilon_n  ] |
,
\end{equation*}
and the lemma follows.
\end{proof}

\subsection{Limit behaviour of the sequences}
\label{sec:limit-behav-sequ}

Now we are ready to consider the dynamics of $\mathbf{u}_n$ in
detail.

We choose and fix $\varepsilon_0 \in (0, 1)$ and $\sigma > 0$ satisfying 
Corollary~\ref{cor:sigma1} and Lemma~\ref{lem:obs}.  As those numbers do not 
depend on admissible coefficients, they satisfy

\begin{condition}
\label{a:sigma1}
If $v_n(x) < \varepsilon_0$, there exists $i$ such that
$|f_{in}(x)| > \sigma$.
\end{condition}

\begin{condition}
\label{a:sigma2}
If $f_{in}(x) \le \sigma$ and $u_{in} (x) \le \sigma$, then there exists $j 
\ne i$ such that $f_j \le  -\sigma$.
\end{condition}

Given $I \subset \{1, \dots, N\}$, define
\begin{equation*}
  A_n(I)
= \left(\bigcap_{i \in I} [f_{in} > \sigma]\right)
\cap
\left(\bigcap_{j \notin I} \left[|f_{jn}| < \frac{\sigma}{2} \right]\right)
.
\end{equation*}

\begin{lemma}
  \label{lem:quasipartition}
  We have
  \begin{equation}
    \label{eq:quasipartition}
    \lim_{n \to \infty} \sum_I |A_n(I)| = |\Omega|
.
  \end{equation}
\end{lemma}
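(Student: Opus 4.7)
The plan is first to reduce the lemma to showing that $B_n := \Omega \setminus \bigcup_I A_n(I)$ has vanishing Lebesgue measure. Note that the $A_n(I)$ are pairwise disjoint: if $I_1 \ne I_2$ and, say, $i \in I_1 \setminus I_2$, then on $A_n(I_1)$ one has $f_{in} > \sigma$ whereas on $A_n(I_2)$ one has $|f_{in}| < \sigma/2 < \sigma$. Hence $\sum_I |A_n(I)| = |\Omega| - |B_n|$, and it suffices to prove $|B_n| \to 0$.

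The next step is to sandwich $B_n$ between the two kinds of level sets appearing in Lemma \ref{lem:claim1}. On one hand, $x \in B_n$ means that some index $i$ satisfies either $f_{in}(x) \le -\sigma/2$ or $\sigma/2 \le f_{in}(x) \le \sigma$; both cases yield $|f_{in}(x)| \ge \sigma/2$ and therefore $g_n(x) \ge (\sigma/2)^p$, so $B_n \subset [g_n \ge (\sigma/2)^p]$. On the other hand, I would invoke Corollary \ref{cor:genlimv} with $\varepsilon := \sigma/2$ (after, if necessary, shrinking $\sigma$ so that it also serves as the constant in that corollary; this is harmless since all of these constants depend only on $\kappa$ and each result remains true when $\sigma$ is replaced by any smaller positive number). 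This produces $\delta = \delta(\sigma/2, \kappa) > 0$ such that whenever $v_n(x) \le \varepsilon_n < \delta$, there exists a nonempty $I = I(x,n)$ with $\sum_{i \in I} u_{in}(x) + \sum_{j \notin I} |f_{jn}(x)| < \sigma/2$ and $f_{in}(x) > \sigma$ for every $i \in I$. In particular $|f_{jn}(x)| < \sigma/2$ for every $j \notin I$, so $x \in A_n(I)$. This gives the inclusion $[v_n \le \varepsilon_n] \subset \bigcup_I A_n(I)$ for all large enough $n$, and consequently $B_n \subset [v_n > \varepsilon_n]$.

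Combining the two containments, $B_n \subset [v_n > \varepsilon_n] \cap [g_n > (\sigma/2)^p / 2]$, and Lemma \ref{lem:claim1} applied with $a = (\sigma/2)^p / 2$ yields $|B_n| \to 0$, which proves \eqref{eq:quasipartition}. The only delicate point is the bookkeeping reconciliation of the $\sigma$ defining $A_n(I)$ (selected via Corollary \ref{cor:sigma1} and Lemma \ref{lem:obs}) with the $\sigma$ produced by Corollary \ref{cor:genlimv}; as noted above, this is resolved by fixing from the outset a single $\sigma$ valid for all three results. No coarea or isoperimetric machinery is needed at this stage — that enters only in the subsequent sections where the individual pieces $A_n(I)$ are further analyzed.
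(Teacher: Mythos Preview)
Your proof is correct and follows essentially the same route as the paper's: both establish the inclusion $[v_n \le \varepsilon_n] \cup [g_n \le (\sigma/2)^p] \subset \bigcup_I A_n(I)$ (you phrase it contrapositively via the complement $B_n$) and then invoke Lemma~\ref{lem:claim1}. Your explicit remark that the sets $A_n(I)$ are pairwise disjoint is a helpful addition the paper leaves implicit, and your observation that the various $\sigma$'s must be reconciled from the outset is likewise a point the paper glosses over.
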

\begin{proof}
We prove the lemma by showing the inclusion
\begin{equation}
  \label{eq:quasipartition0}
  [v_n \le \varepsilon_n] \cup [ g_n \le a] \subset \bigcup_I A_n(I)
\end{equation}
with suitable $a$ and evoking Lemma~\ref{lem:claim1}.

Take $a = (\sigma/2)^p$, then the inequality $g_n \le a$ clearly
implies $|f_i| \le \sigma/2$ for any $i$, so
\begin{equation}
  \label{eq:quasipartition1}
  [g_n \le a]
  \subset \bigcap_{i=1}^N \left[
    |f_{in}| \le \frac{\sigma}{2}
  \right]
  = A_n(\varnothing)
  .
\end{equation}

Now suppose that $v_n(x) \le \varepsilon_n$ for some $x$.  Applying
Corollary~\ref{cor:genlimv} with $\varepsilon = \sigma/2$, we find $I
\ne \varnothing$ such that
\begin{equation*}
f_{in}(x) > \sigma \quad (i \in I)
\end{equation*}
and
\begin{equation*}
  \sum_{j \notin I} |f_{jn} (x)| < \frac{\sigma}{2}
\end{equation*}
whenever $\varepsilon_n < \delta$ for some $\delta > 0$ independent of
$n$ and $x$.  Consequently,
\begin{equation*}
  x \in A_n(I)
\end{equation*}
and we have the inclusion
\begin{equation}
  \label{eq:quasipartition2}
  [ v_n \le \varepsilon_n ] \subset \bigcup_{I \ne \varnothing} A_n(I)
  .
\end{equation}
Combining \eqref{eq:quasipartition1} and \eqref{eq:quasipartition2}, we obtain 
\eqref{eq:quasipartition0}.

By Lemma~\ref{lem:claim1}, the measure of the left-hand side of
inclusion \eqref{eq:quasipartition0} converges to $|\Omega|$, and
\eqref{eq:quasipartition} follows.
\end{proof}

We can assume that the limits
\begin{equation*}
  \lim_{k \to \infty} A_{n} (I)
\end{equation*} exist.  In view of Lemma~\ref{lem:quasipartition} we
face three logical possibilities:

(i) $\lim_{n \to \infty} A_{n} (I) = |\Omega|$ for some $I \ne
\varnothing$;

(ii) $\lim_{n \to \infty} A_{n} (\varnothing) = |\Omega|$;

(iii) $\lim_{n \to \infty} A_{n} (I_s) > 0$ ($s = 1, 2$) with $I_1
\ne I_2$.

We conclude the proof of Theorem~\ref{th:kmpv-trunc} by examining the
alternatives (i)--(iii).  It is fairly straightforward to demonstrate
that (i) implies \eqref{eq:kmpv3} and \eqref{eq:kmpv4}.  A more subtle
analysis based on the coarea formula and the relative isoperimetric inequality 
shows that (ii) and (iii) are in fact impossible.

Recall that the relative perimeter of a Lebesgue measurable set $A$ of (locally) finite perimeter is defined as
\begin{equation*}
P(A;\Omega) = \mu_A(\Omega)
,
\end{equation*}
where $\mu_A$ is the total variation of the Gauss--Green measure of $A$ (see 
\cite{Mag12}).  We need the following properties of the perimeters:
\begin{lemma}[\cite{Mag12}, Proposition~12.19 and Lemma~12.22]
\label{lem:maggi}
If $A$ is a set of locally finite perimeter in $\mathbb R^d$, then
\begin{equation*}
\supp \mu_A \subset \partial A;
\end{equation*}
if $A$ and $B$ are sets of (locally) finite perimeter in $\mathbb R^d$, then 
$A \cup B$ is a set of (locally) finite perimeter in $\mathbb R^d$, and, for 
$\Omega \subset \mathbb R^d$ open,
\begin{equation*}
P(A \cup B;\Omega) \le P(A; \Omega) + P(B; \Omega)
.
\end{equation*}
\end{lemma}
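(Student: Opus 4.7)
My plan is to treat the two assertions separately, reducing both to standard facts about BV functions, with mollification doing the heavy lifting in the second part.

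For the support statement, if $x \notin \partial A$ then $x$ lies either in $\mathrm{int}(A)$ or in $\mathrm{int}(\mathbb R^d\setminus A)$, both of which are open. On such a neighbourhood $U$ of $x$, $\chi_A$ is constant and so the distributional gradient $D\chi_A$ vanishes as a distribution on $U$. Hence the total variation measure $\mu_A$ gives zero mass to $U$, so $x \notin \supp \mu_A$ (using the convention that the support is the smallest closed set outside of which $\mu_A$ vanishes). The only care needed is to fix this convention; once done, the argument is entirely local.

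For the subadditivity of the relative perimeter, my approach is to exploit the pointwise identity
\[
\chi_{A \cup B} = 1 - (1-\chi_A)(1-\chi_B).
\]
Mollifying with a standard family $\rho_\epsilon$ and setting $\phi_\epsilon = \chi_A * \rho_\epsilon$, $\psi_\epsilon = \chi_B * \rho_\epsilon$, I would introduce the smooth approximant
\[
h_\epsilon = 1 - (1-\phi_\epsilon)(1-\psi_\epsilon) = \phi_\epsilon + \psi_\epsilon - \phi_\epsilon \psi_\epsilon,
\]
which takes values in $[0,1]$ and converges to $\chi_{A \cup B}$ in $L^1_{\mathrm{loc}}$. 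Direct differentiation yields $\nabla h_\epsilon = (1-\psi_\epsilon)\nabla\phi_\epsilon + (1-\phi_\epsilon)\nabla\psi_\epsilon$, and since both weights lie in $[0,1]$ the pointwise bound $|\nabla h_\epsilon| \le |\nabla\phi_\epsilon| + |\nabla\psi_\epsilon|$ is immediate.

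Integrating over $\Omega$ and letting $\epsilon \to 0$, lower semicontinuity of the total variation under $L^1_{\mathrm{loc}}$ convergence gives
\[
P(A \cup B; \Omega) \le \liminf_{\epsilon \to 0}\int_\Omega |\nabla h_\epsilon|\,\mathrm dx \le \liminf_{\epsilon \to 0}\int_\Omega \bigl(|\nabla \phi_\epsilon| + |\nabla\psi_\epsilon|\bigr)\,\mathrm dx.
\]
The concluding ingredient is the standard mollification estimate $\int_\Omega |\nabla (\chi_E * \rho_\epsilon)|\,\mathrm dx \le |D\chi_E|(\Omega_\epsilon)$, where $\Omega_\epsilon$ is an $\epsilon$-thickening of $\Omega$. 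Passing through an exhausting sequence $\Omega_k \Subset \Omega$ to control the thickening and then sending $k \to \infty$ closes the argument; this exhaustion step is the only real technicality, everything else being routine. Local finiteness of the perimeter of $A\cup B$ drops out of the same estimate applied on compact subsets.
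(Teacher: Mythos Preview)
The paper does not supply its own proof of this lemma; it is quoted as a black-box citation from Maggi's textbook (Proposition~12.19 and Lemma~12.22), so there is no in-paper argument to compare against.

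Your argument is correct. The support claim is the standard one-line observation, and your mollification route for the perimeter subadditivity---via the identity $\chi_{A\cup B}=1-(1-\chi_A)(1-\chi_B)$, the pointwise gradient bound $|\nabla h_\epsilon|\le|\nabla\phi_\epsilon|+|\nabla\psi_\epsilon|$, lower semicontinuity of the total variation under $L^1_{\mathrm{loc}}$ convergence, and an exhaustion $\Omega_k\Subset\Omega$ to absorb the $\epsilon$-thickening---is a valid self-contained proof. Maggi's Lemma~12.22 in fact records the stronger submodularity $P(A\cup B;\Omega)+P(A\cap B;\Omega)\le P(A;\Omega)+P(B;\Omega)$ (obtained from $\chi_{A\cup B}+\chi_{A\cap B}=\chi_A+\chi_B$), but the paper only ever uses the weaker inequality you establish.
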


\subsection{Convergence in case (i)}
\label{sec:convergence-case-i}

Assume that (i) holds.  We claim that \eqref{eq:kmpv3} and
\eqref{eq:kmpv4} are valid, i.~e. for any $\varepsilon > 0$,
\begin{equation}
  \label{eq:poss0}
  \lim_{k \to \infty}
  \left|
    \left[
      \sum_{j \in I} u_{in}
      + \sum_{i \notin I} | f_{in} |
      \ge \varepsilon
    \right]
  \right|
  = 0
  .
\end{equation}

By assumption, $I \ne \varnothing$, so for any $x \in A_{n}(I)$ we
have $f_{in}(x) > \sigma$ for at least one $i$ and thus $g_{n}(x)
> \sigma^p$.  It follows from Lemma~\ref{lem:claim1} that
\begin{equation*}
  \lim_{k \to \infty} | A_{n}(I) \cap [v_{n} > \varepsilon_{n} ]
  | = 0
\end{equation*}
and consequently
\begin{equation}
  \label{eq:poss1}
  \lim_{k \to \infty} | A_{n}(I) \cap [v_{n} \le \varepsilon_{n} ]
  | = |\Omega|
  .
\end{equation}

Take $x \in A_{n}(I) \cap [v_{n} \le \varepsilon_{n}]$.  By
Corollary~\ref{cor:genlimv}, there exists $\delta > 0$ independent of
$k$ and $x$ such that for some $I_{k, x}$ we have
\begin{gather}
  \sum_{j \in I_{k,x}} u_{in}(x)
  + \sum_{i \notin I_{k,x}} | f_{in}(x) |
  < \varepsilon_n
  ,
  \label{eq:poss2}
  \\
  f_{jn}(x) > \sigma \quad (j \in I_{k, x})
  ,
  \label{eq:poss3}
\end{gather}
provided that $v_{n}(x) < \delta$, which holds without loss of
generality.  However, \eqref{eq:poss2} and \eqref{eq:poss3} are only
compatible with the definition of $A_{n}(I)$ in the case $I_{k, x} =
I$.  Thus,
\begin{equation*}
  \sum_{j \in I} u_{in}(x)
  + \sum_{i \notin I} | f_{in}(x) |
  < \varepsilon
  \quad (x \in A_{n}(I) \cap [v_{n} \le \varepsilon_{n}])
  ,
\end{equation*}
or, equivalently,
\begin{equation*}
  A_{n}(I) \cap [v_{n} \le \varepsilon_{n}]
  \subset
  \left[
    \sum_{j \in I} u_{in}
    + \sum_{i \notin I} | f_{in} |
    < \varepsilon
  \right]
  .
\end{equation*}
Consequently,
\begin{equation*}
  \left|
    \left[
      \sum_{j \in I} u_{in}
      + \sum_{i \notin I} | f_{in} |
      \ge \varepsilon
    \right]
  \right|
  \le
  |\Omega| - |A_{n}(I) \cap [v_{n} \le \varepsilon_{n}]|
\end{equation*}
and by \eqref{eq:poss1}, the limit~\eqref{eq:poss0} holds.

\subsection{Impossibility of case (ii)}
\label{sec:imposs-case-ii}

We argue by contradiction that case (ii) is impossible.  Thus, we assume that
\begin{equation}
  \label{eq:case1}
  \lim_{n \to \infty}
  \left|
    \bigcap_{i=1}^N \left[ |f_{in}| \le \frac{\sigma}{2} \right]
  \right|
  = | \Omega|
  .
\end{equation}

Given $t \in (\sigma/2, \sigma)$ define the set
\begin{equation*}
A_n(t) = \bigcup_{i=1}^N [|f_{in}| > t]
.
\end{equation*}
For fixed $n$, $A_n(t)$ decreases with respect to $t$.  We need to establish 
several properties of these sets.
\begin{lemma}
\label{lem:c1a1}
For fixed $t$,
\begin{equation*}
\lim_{n \to \infty} |A_n(t)| = 0
.
\end{equation*}
\end{lemma}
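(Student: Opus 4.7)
The plan is to simply unwind the definitions and exploit the case (ii) hypothesis together with the fact that $t$ is strictly greater than $\sigma/2$. Specifically, I would observe that for $t > \sigma/2$ the sets $A_n(t)$ and $A_n(\varnothing)$ are disjoint: any $x \in A_n(t)$ has $|f_{in}(x)| > t > \sigma/2$ for at least one $i$, whereas $x \in A_n(\varnothing) = \bigcap_{j=1}^N [|f_{jn}| < \sigma/2]$ requires $|f_{jn}(x)| < \sigma/2$ for every $j$. Hence these two subsets of $\Omega$ cannot overlap.

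From this disjointness it follows immediately that
\begin{equation*}
|A_n(t)| \le |\Omega| - |A_n(\varnothing)|.
\end{equation*}
The standing assumption of case (ii) is precisely that $|A_n(\varnothing)| \to |\Omega|$ as $n \to \infty$, so passing to the limit yields $|A_n(t)| \to 0$, which is the conclusion of the lemma.

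There is no real obstacle here: the statement is a direct bookkeeping consequence of the scenario we are in, and I expect its role is to set up the sequel, where super-level sets of $|f_{in}|$ at thresholds $t \in (\sigma/2, \sigma)$ will be plugged into the coarea formula and the relative isoperimetric inequality. The ``smallness'' just established for $|A_n(t)|$ is what will later allow the relative isoperimetric inequality to be invoked (which requires the set to have measure at most $|\Omega|/2$) and drive the contradiction ruling out case (ii).
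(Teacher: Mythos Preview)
Your proof is correct and essentially identical to the paper's: both observe that $A_n(t)\subset\Omega\setminus\bigcap_{i}[|f_{in}|\le\sigma/2]$ (equivalently, $A_n(t)\cap A_n(\varnothing)=\varnothing$) because $t>\sigma/2$, and then invoke the case~(ii) hypothesis $|A_n(\varnothing)|\to|\Omega|$. Your remark about the subsequent use of the lemma for the relative isoperimetric inequality is also accurate.
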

\begin{proof}
We have:
\begin{equation*}
A_n(t)
\subset \bigcup_{i=1}^N \left[ |f_{in}| > \frac{\sigma}{2} \right]
= \Omega \setminus \bigcap_{i=1}^N \left[ |f_{in}| \le \frac{\sigma}{2} 
\right]
,
\end{equation*}
so
\begin{equation*}
|A_n(t)|
\le
|\Omega| -
\left| \bigcap_{i=1}^N [ |f_{in}| \le \sigma ] \right|
\to 0
\quad (n \to \infty)
\end{equation*}
according to~\eqref{eq:case1}.
\end{proof}

\begin{lemma}
\label{lem:c1a2}
The following inclusion holds:
\begin{equation}
\label{eq:c1a2}
\partial_\Omega A_n(t)
\subset
\bigcap_{i = 1}^N [ u_{in} > \varepsilon_n ]
.
\end{equation}
\end{lemma}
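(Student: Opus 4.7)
The plan is to exploit Condition~\ref{a:sigma2} pointwise at any boundary point of $A_n(t)$, using the fact that boundary points sit on a level set of some $f_{in}$ with $|f_{in}| = t$ while still having $|f_{jn}| \le t < \sigma$ for every $j$.

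First I would characterize $\partial_\Omega A_n(t)$. Since each $f_{in}$ is continuous (indeed smooth by the reduction in Section~\ref{sec:preliminaries}), each set $[|f_{in}|>t]$ is open, hence so is $A_n(t)$, and its complement $\bigcap_{i=1}^N [|f_{in}|\le t]$ is closed. Thus a point $x\in \partial_\Omega A_n(t)$ satisfies $|f_{in}(x)|\le t$ for every $i=1,\dots,N$, while continuity together with $x$ being a limit of points in $A_n(t)$ forces at least one equality $|f_{i_0 n}(x)|=t$ for some $i_0$.

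Next, since $t<\sigma$, we in particular have the two-sided bound
\begin{equation*}
-\sigma < f_{in}(x) < \sigma \qquad (i=1,\dots,N)
\end{equation*}
at every boundary point $x\in\partial_\Omega A_n(t)$. Now fix such an $x$ and suppose, for contradiction, that $u_{in}(x)\le \varepsilon_n$ for some index $i$. Passing (without loss of generality) to $n$ large enough that $\varepsilon_n\le \sigma$, this gives $u_{in}(x)\le \sigma$ together with $f_{in}(x)\le \sigma$, so Condition~\ref{a:sigma2} applies and produces an index $j\ne i$ with $f_{jn}(x)\le -\sigma$. This contradicts the strict lower bound $f_{jn}(x)>-\sigma$ from the previous display.

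Therefore every $u_{in}(x)>\varepsilon_n$, which is precisely the inclusion~\eqref{eq:c1a2}. There is no serious obstacle here: once the topological description of $\partial_\Omega A_n(t)$ is unpacked, the argument is a direct pointwise application of Condition~\ref{a:sigma2}, and the only mild bookkeeping is the harmless passage to $n$ large enough that $\varepsilon_n\le\sigma$.
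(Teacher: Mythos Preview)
Your argument is correct and follows essentially the same route as the paper: both reduce to a pointwise application of Condition~\ref{a:sigma2}. The paper phrases it as the contrapositive (if $u_{jn}(x)\le\varepsilon_n$ then $x$ lies in the interior of $A_n(t)$, with a two-case split on whether $f_{jn}(x)>t$), whereas your preliminary characterization of $\partial_\Omega A_n(t)$ as lying in the closed complement $\bigcap_i[|f_{in}|\le t]$ simply collapses the paper's first case and leads straight to the same contradiction; the observation that some $|f_{i_0 n}(x)|=t$ is true but unused.
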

\begin{proof}
We take an $x \in \Omega$ such that $u_{jn}(x) \le \varepsilon_n$ for some $j$ 
and show that $x$ is an interior point of $A_n(t)$.  There are two 
possibilities: either $f_{jn}(x) > t$ or $f_{jn}(x) \le t$.  In the former 
case we see immediately that $x$ is an interior point of $A_n(t)$.  In the 
latter case we have $f_{jn}(x) \le \sigma$, and applying 
Condition~\ref{a:sigma2} we find $f_{in}$ such that $f_{in}(x) \le -\sigma < 
- t$.  But then $|f_{in}(x)| > t$, and again $x$ is an interior point of 
$A_n(t)$.
\end{proof}

\begin{lemma}
\label{lem:c1a3}
If $n$ is sufficiently large, the following inclusion holds:
\begin{equation}
\label{eq:c1a3}
A_n(t) \supset [ v_n \le \varepsilon_n ]
.
\end{equation}
\end{lemma}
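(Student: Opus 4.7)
The plan is to derive this inclusion as an almost immediate consequence of Condition~\ref{a:sigma1}, which was set up precisely for situations like this. Recall that $t$ is fixed in the interval $(\sigma/2,\sigma)$, so in particular $\sigma > t$.

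First, I would use the fact that $\varepsilon_n \to 0$ to pick $n$ large enough so that $\varepsilon_n < \varepsilon_0$, where $\varepsilon_0$ is the constant from Condition~\ref{a:sigma1}. Then for any $x \in [v_n \le \varepsilon_n]$ I have $v_n(x) \le \varepsilon_n < \varepsilon_0$, so Condition~\ref{a:sigma1} applies at $x$, yielding an index $i$ (depending on $x$ and $n$) such that $|f_{in}(x)| > \sigma > t$. By definition of $A_n(t)$, this places $x$ in $[|f_{in}|>t]\subset A_n(t)$, which is exactly the desired inclusion.

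There is no serious obstacle here: the work has already been done in establishing Corollary~\ref{cor:sigma1} and the choice of $\varepsilon_0$ and $\sigma$ satisfying Condition~\ref{a:sigma1}. The only thing to verify is the trivial arithmetic $\sigma > t$, which holds by the hypothesis $t < \sigma$, and the qualification ``for $n$ sufficiently large,'' which comes solely from ensuring $\varepsilon_n < \varepsilon_0$. No use of the coarea formula, isoperimetric inequality, or any of the gradient information from \eqref{eq:p03} is needed at this stage; those tools will presumably enter when estimating the perimeters of $A_n(t)$ in the subsequent analysis.
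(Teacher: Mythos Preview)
Your argument is correct and is essentially the paper's own proof: invoke Condition~\ref{a:sigma1} once $\varepsilon_n<\varepsilon_0$ to get $|f_{in}(x)|>\sigma>t$ for some $i$, hence $x\in A_n(t)$. The paper compresses this into the single chain $[v_n\le\varepsilon_n]\subset\bigcup_i[|f_{in}|>\sigma]\subset A_n(t)$, but the content is identical.
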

\begin{proof}
By Condition~\ref{a:sigma1}, for large $n$ we have
\begin{equation*}
[v_n \le \varepsilon_n] \subset
\bigcup_{i =1}^N [ |f_{in}| > \sigma ]
\subset A_n(t)
.
\end{equation*}
\end{proof}

It follows from Lemma~\ref{lem:c1a1} that we can write the isoperimetric 
inequality
\begin{equation}
\label{eq:p10}
P(A_n(t); \Omega) \ge c_\Omega |A_n(t)|^\frac{d-1}{d}
.
\end{equation}

Estimate the left-hand side of~\eqref{eq:p03}:
\begin{align*}
\frac{1}{\varepsilon_n}
\int_\Omega \sum_{i = 1}^N \tilde u_{in} |\nabla f_{in}|^p
\, \mathrm dx
& \ge
\frac{1}{\varepsilon_n}
\sum_{i = 1}^N
\int_{\left[ g_n > \left(\frac{\sigma}{2}\right)^p \right]} \tilde u_{in} 
|\nabla f_{in}|^p
\, \mathrm dx
\\
& \ge
\frac{1}{\varepsilon_n \left| \left[ g_n > \left(\frac{\sigma}{2}\right)^p 
\right] \right|^{p-1}}
\sum_{i = 1}^N
\left(
\int_{\left[ g_n > \left(\frac{\sigma}{2}\right)^p \right]} \tilde 
u_{in}^{1/p} |\nabla f_{in}|
\, \mathrm dx
\right)^p
\\
& \ge
\frac{1}{\varepsilon_n N^{p-1}
\left| \left[ g_n > \left(\frac{\sigma}{2}\right)^p \right] \right|^{p-1}}
\left(
\sum_{i = 1}^N
\int_{\left[g_n > \left(\frac{\sigma}{2}\right)^p\right]} \tilde u_{in}^{1/p} 
|\nabla f_{in}|
\, \mathrm dx
\right)^p
\\
& \ge
\frac{1}{N^{p-1}
\left| \left[ g_n > \left(\frac{\sigma}{2}\right)^p \right] \right|^{p-1}}
\left(
\sum_{i = 1}^N
\int_{\left[g_n > \left(\frac{\sigma}{2}\right)^p\right] \cap [ u_{in} > 
\varepsilon_n]} |\nabla f_{in}|
\, \mathrm dx
\right)^p
.
\end{align*}
Apply the coarea formula~\cite[Theorem~13.1, formula (13.10)]{Mag12} and 
Lemma~\ref{lem:maggi}:
\begin{multline*}
\frac{1}{\varepsilon_n}
\int_\Omega \sum_{i = 1}^N \tilde u_{in} |\nabla f_{in}|^p
\, \mathrm dx
\\
\ge
\frac{1}{N^{p-1} \left| \left[ g_n > \left(\frac{\sigma}{2}\right)^p \right] 
\right|^{p-1}}
\Bigg(
\sum_{i = 1}^N
\int_{0}^{\infty}
P \left([f_{in} > t] ; \left[g_n > \left(\frac{\sigma}{2}\right)^p\right] \cap 
[u_{in} > \varepsilon_n] \right) \, \mathrm dt
\\
+
\int_{-\infty}^{0}
P \left([f_{in} < t] ; \left[g_n > \left(\frac{\sigma}{2}\right)^p\right] \cap 
[u_{in} > \varepsilon_n] \right) \, \mathrm dt
\Bigg)^p
\\
\ge
\frac{1}{N^{p-1} \left| \left[ g_n > \left(\frac{\sigma}{2}\right)^p \right] 
\right|^{p-1}}
\Bigg(
\sum_{i = 1}^N
\int_{\sigma/2}^{\sigma}
\Bigg(
P \left([f_{in} > t] ; \left[g_n > \left(\frac{\sigma}{2}\right)^p\right] \cap 
[u_{in} > \varepsilon_n] \right)
\\
+
P \left([f_{in} < -t] ; \left[g_n > \left(\frac{\sigma}{2}\right)^p\right] 
\cap [u_{in} > \varepsilon_n] \right)
\Bigg)
\, \mathrm dt
\Bigg)^p
\\
\ge
\frac{1}{N^{p-1} \left| \left[ g_n > \left(\frac{\sigma}{2}\right)^p \right] 
\right|^{p-1}}
\left(
\sum_{i = 1}^N
\int_{\sigma/2}^{\sigma}
P \left([|f_{in}| > t] ; \left[g_n > \left(\frac{\sigma}{2}\right)^p\right] 
\cap [u_{in} > \varepsilon_n] \right) \, \mathrm dt
\right)^p
\\
=
\frac{1}{N^{p-1} \left| \left[ g_n > \left(\frac{\sigma}{2}\right)^p \right] 
\right|^{p-1}}
\left(
\sum_{i = 1}^N
\int_{\sigma/2}^{\sigma}
\mu_{[|f_{in}| > t]} \left(\left[g_n > \left(\frac{\sigma}{2}\right)^p\right] 
\cap [u_{in} > \varepsilon_n] \right) \, \mathrm dt
\right)^p
\end{multline*}
Observe that for $t \in (\sigma/2, \sigma)$ by Lemma~\ref{lem:maggi} we have
\begin{equation*}
\supp \mu_{[|f_{in}| > t]}
\subset \partial[|f_{in}| > t]
\subset[|f_{in}| = t]
\subset \left[g_n > \left(\frac\sigma2\right)^p\right]
,
\end{equation*}
so we can proceed as follows:
\begin{multline*}
\frac{1}{\varepsilon_n}
\int_\Omega \sum_{i = 1}^N \tilde u_{in} |\nabla f_{in}|^p
\, \mathrm dx
\\
\ge
\frac{1}{N^{p-1} \left| \left[ g_n > \left(\frac{\sigma}{2}\right)^p \right] 
\right|^{p-1}}
\left(
\sum_{i = 1}^N
\int_{\sigma/2}^{\sigma}
\mu_{[|f_{in}| > t]} ([u_{in} > \varepsilon_n] ) \, \mathrm dt
\right)^p
\\
\ge
\frac{1}{N^{p-1} \left| \left[ g_n > \left(\frac{\sigma}{2}\right)^p \right] 
\right|^{p-1}}
\left(
\sum_{i = 1}^N
\int_{\sigma/2}^{\sigma}
\mu_{[|f_{in}| > t]} \left(\bigcap_{i=1}^N [u_{in} > \varepsilon_n] \right) \, 
\mathrm dt
\right)^p
\\
=
\frac{1}{N^{p-1} \left| \left[ g_n > \left(\frac{\sigma}{2}\right)^p \right] 
\right|^{p-1}}
\left(
\sum_{i = 1}^N
\int_{\sigma/2}^{\sigma}
P\left([|f_{in}| > t]; \left(\bigcap_{i=1}^N [u_{in} > \varepsilon_n] \right) 
\right) \, \mathrm dt
\right)^p
\\
\ge
\frac{1}{N^{p-1} \left| \left[ g_n > \left(\frac{\sigma}{2}\right)^p \right] 
\right|^{p-1}}
\left(
\int_{\sigma/2}^{\sigma}
P\left(A_n(t); \left(\bigcap_{i=1}^N [u_{in} > \varepsilon_n] \right) \right) 
\, \mathrm dt
\right)^p
.
\end{multline*}
Now~\eqref{eq:c1a2} implies that
\begin{equation*}
\supp \mu_{A_n(t)} \cap \Omega \subset \bigcap_{i=1}^N [u_{in} > 
\varepsilon_n]
,
\end{equation*}
so we can write
\begin{equation*}
\frac{1}{\varepsilon_n}
\int_\Omega \sum_{i = 1}^N \tilde u_{in} |\nabla f_{in}|^p
\, \mathrm dx
\ge
\frac{1}{N^{p-1} \left| \left[ g_n > \left(\frac{\sigma}{2}\right)^p \right] 
\right|^{p-1}}
\left(
\int_{\sigma/2}^{\sigma}
P(A_n(t); \Omega ) \, \mathrm dt
\right)^p
.
\end{equation*}
Employing the isoperimetric inequality~\eqref{eq:p10} to get
\begin{equation*}
\frac{1}{\varepsilon_n}
\int_\Omega \sum_{i = 1}^N \tilde u_{in} |\nabla f_{in}|^p
\, \mathrm dx
\ge
\frac{c_\Omega^p}{N^{p-1} \left| \left[ g_n > \left(\frac{\sigma}{2}\right)^p 
\right] \right|^{p-1}}
\left(
\int_{\sigma/2}^{\sigma}
|A_n(t)|^{\frac{d-1}{d}}
\, \mathrm dt
\right)^p
.
\end{equation*}
Estimate $|A_n(t)|$ using the inclusion~\eqref{eq:c1a3} and 
Lemma~\ref{lem:claim2}:
\begin{align*}
\frac{1}{\varepsilon_n}
\int_\Omega \sum_{i = 1}^N \tilde u_{in} |\nabla f_{in}|^p
\, \mathrm dx
& \ge
\frac{c_\Omega^p (\sigma/2)^p |[v_n \le \varepsilon_n]|^{p(1 - 
1/d)}}{C_{(\sigma/2)^p}N^{p-1} | [ v_n \le \varepsilon_n ] |^{p-1}}
\\
& =
\frac{
c_\Omega^p (\sigma/2)^p
}{
C_{(\sigma/2)^p}N^{p-1}
}
| [ v_n \le \varepsilon_n ] |^{1 - \frac{p}{d}}
.
\end{align*}
Combining obtained estimate with~\eqref{eq:p03}, we get:
\begin{equation*}
\frac{
c_\Omega^p (\sigma/2)^p
}{
C_{(\sigma/2)^p}N^{p-1}
}
| [ v_n \le \varepsilon_n ] |^{1 - \frac{p}{d}}
\le
M \varepsilon_n
|[v_n \le \varepsilon_n]|
,
\end{equation*}
whence
\begin{equation*}
\frac{
c_\Omega^p (\sigma/2)^p
}{
C_{(\sigma/2)^p}N^{p-1}
}
\le
M \varepsilon_n
|[v_n \le \varepsilon_n]|^{\frac pd} \to 0
\quad (n \to \infty)
,
\end{equation*}
contrary to the fact that the left-hand side is a positive constant 
independent of $n$.

This contradiction means that at least assumption~\eqref{eq:case1} is 
impossible.

\subsection{Impossibility of case (iii)}
\label{sec:imposs-case-iii}

We complete the proof of Theorem~\ref{th:kmpv} by demonstrating that the case 
(iii) is also impossible.  We argue by contradiction. We assume that there
exist $I_1 \ne I_2$ such that
\begin{equation}
  \label{eq:1}
  |A_n(I_s)| \ge \mu_s > 0 \quad (s = 1, 2)
  .
\end{equation}
Without loss of generality, $1 \in I_1 \setminus I_2$.

Given $t \in (\sigma/2, \sigma)$, define the set
\begin{equation}
  \label{eq:3}
A_n(t)
=
[ f_{1n} > t ]
\cup
\left(
\bigcup_{i=1}^{N}
[ f_{in} < -t ]
\right)
.
\end{equation}
If $n$ is fixed, the sets $A_n(t)$ decrease with respect to $t$.

\begin{lemma}
\label{lem:c2a1}
The relative perimeter of $A_n(t)$ can be estimated as
\begin{equation}
\label{eq:c2a1}
P(A_n(t); \Omega) \ge p_0
,
\end{equation}
where $p_0 > 0$ is independent of $t$ and $n$.
\end{lemma}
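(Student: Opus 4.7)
The plan is to use the lower bounds \eqref{eq:1} on $|A_n(I_1)|$ and $|A_n(I_2)|$ to squeeze $|A_n(t)|$ both away from $0$ and away from $|\Omega|$, uniformly in $n$ and $t\in(\sigma/2,\sigma)$, and then close the argument by applying the relative isoperimetric inequality to whichever of $A_n(t)$ or $\Omega\setminus A_n(t)$ has the smaller measure, using the identity $P(A_n(t);\Omega)=P(\Omega\setminus A_n(t);\Omega)$.

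The heart of the matter will be to verify, for every $t\in(\sigma/2,\sigma)$, the two inclusions
\begin{equation*}
A_n(I_1)\subset A_n(t)\quad\text{and}\quad A_n(I_2)\subset\Omega\setminus A_n(t).
\end{equation*}
The first is immediate: since $1\in I_1$, any $x\in A_n(I_1)$ satisfies $f_{1n}(x)>\sigma>t$, so $x\in[f_{1n}>t]\subset A_n(t)$. For the second, fix $x\in A_n(I_2)$. Because $1\notin I_2$, the definition of $A_n(I_2)$ yields $|f_{1n}(x)|<\sigma/2<t$, which rules out $x\in[f_{1n}>t]$. Moreover, for every index $i$ either $i\in I_2$ with $f_{in}(x)>\sigma>-t$, or $i\notin I_2$ with $f_{in}(x)>-\sigma/2>-t$; in either case $x\notin[f_{in}<-t]$. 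Therefore $x\notin A_n(t)$.

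Combining these inclusions with \eqref{eq:1} will give $|A_n(t)|\ge\mu_1$ and $|\Omega\setminus A_n(t)|\ge\mu_2$ for all $n$ and all $t\in(\sigma/2,\sigma)$. Setting $\nu=\min(\mu_1,\mu_2)$, whichever of $A_n(t)$ and $\Omega\setminus A_n(t)$ has measure at most $|\Omega|/2$ then has measure in $[\nu,|\Omega|/2]$, and the relative isoperimetric inequality applied to it, together with $P(A_n(t);\Omega)=P(\Omega\setminus A_n(t);\Omega)$, will yield
\begin{equation*}
P(A_n(t);\Omega)\ge c_\Omega\nu^{(d-1)/d}=:p_0>0,
\end{equation*}
which is the required bound. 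I do not anticipate any real obstacle: the argument is a purely quantitative two-sided volume lower bound. The only mildly delicate point is that for exceptional $t$ the set $A_n(t)$ might fail to have locally finite perimeter, but then $P(A_n(t);\Omega)=\infty$ and the inequality is trivially satisfied, so the estimate holds for every $t\in(\sigma/2,\sigma)$ as stated.
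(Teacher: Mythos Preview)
Your argument is correct and essentially identical to the paper's: both verify the inclusions $A_n(I_1)\subset A_n(t)\subset\Omega\setminus A_n(I_2)$ and then apply the relative isoperimetric inequality to $\min(|A_n(t)|,|\Omega\setminus A_n(t)|)$. Your write-up is in fact slightly more careful---you spell out both the $i\in I_2$ and $i\notin I_2$ cases for the second inclusion, and you obtain the clean constant $c_\Omega(\min(\mu_1,\mu_2))^{(d-1)/d}$, whereas the paper records $c_\Omega(\min(\mu_1,1-\mu_2))^{(d-1)/d}$, which appears to be a typo.
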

\begin{proof}
First of all, observe the inclusions
\begin{equation}
  \label{eq:2}
  A_n(I_1) \subset A_n(t) \subset \Omega \setminus A_n(I_2)
  .
\end{equation}
Indeed, if $x \in A_n(I_1)$, then $f_{1n}(x) > \sigma$, so $x$ belongs
to the first set in the right-hand side of \eqref{eq:3}, and the first
inclusion in \eqref{eq:2} holds.  On the other hand, if $x \in
A_n(t)$, then either $f_{1n}(x) > t > \sigma/2$ or $f_{in}(x) < -t <
-\sigma/2$ for some $i$.  As $1 \notin I_2$, it is clear that in both
cases $x \notin A_n(I_2)$, so the second inclusion in \eqref{eq:2} is
also valid.

The isoperimetric inequality for $A_n(t)$ reads
\begin{equation*}
  P(A_n(t); \Omega) \ge
  c_\Omega
  \Big(
  \min(|A_n(t)|, |\Omega \setminus A_n(t)|)
  \Big)^\frac{d-1}{d}
\end{equation*}
Estimating by means of \eqref{eq:2}, we have:
\begin{equation*}
  P(A_n(t); \Omega) \ge
  c_\Omega
  (
  \min(\mu_1, 1 - \mu_2)
  )^\frac{d-1}{d}
\end{equation*}
and~\eqref{eq:c2a1} follows.
\end{proof}

\begin{lemma}
  \label{lem:1}
  The following inclusions hold:
\begin{gather}
\partial_\Omega A_n(t) \cap \partial [f_{1n} > t]  \subset [u_{1n} > 
\varepsilon_n]
\label{eq:11}
\\
\partial_\Omega A_n(t) \cap \partial [f_{in} < - t] \subset [u_{in} > 
\varepsilon_n]
\label{eq:12}
\end{gather}
\end{lemma}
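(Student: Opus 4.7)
My plan is to prove each inclusion by contradiction, mimicking the proof of Lemma~\ref{lem:c1a2} from case (ii). For~\eqref{eq:11} I would pick $x \in \partial_\Omega A_n(t) \cap \partial[f_{1n}>t]$ and assume, for contradiction, that $u_{1n}(x) \le \varepsilon_n$; the same template will yield~\eqref{eq:12} upon replacing the index $1$ by a general $i$ and the super-level set $[f_{1n}>t]$ by the sub-level set $[f_{in}<-t]$.

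First I would use that, as arranged in Section~\ref{sec:preliminaries}, $\mathbf{u}_n$ is smooth, so each $f_{in}$ is continuous; in particular $[f_{1n}>t]$ is open and $\partial[f_{1n}>t]\subset[f_{1n}=t]$, so $f_{1n}(x)=t$ and therefore $f_{1n}(x) \le \sigma$. Since $\varepsilon_n\to 0$, for large $n$ we have $u_{1n}(x)\le\varepsilon_n\le\sigma$, and Condition~\ref{a:sigma2} (applied at index $1$) produces some $j\ne 1$ with $f_{jn}(x)\le-\sigma$. Because $t<\sigma$, this is the strict inequality $f_{jn}(x)<-t$; by continuity of $f_{jn}$, a whole open neighborhood of $x$ lies in $[f_{jn}<-t]\subset A_n(t)$ (using definition~\eqref{eq:3}), making $x$ an interior point of $A_n(t)$, contradicting $x\in\partial_\Omega A_n(t)$. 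This establishes~\eqref{eq:11}.

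The proof of~\eqref{eq:12} I expect to be essentially identical: if $x\in\partial[f_{in}<-t]$, then continuity forces $f_{in}(x)=-t$, hence $f_{in}(x)\le\sigma$, and assuming $u_{in}(x)\le\varepsilon_n\le\sigma$, Condition~\ref{a:sigma2} once more delivers $j\ne i$ with $f_{jn}(x)\le-\sigma<-t$, placing an open neighborhood of $x$ inside $[f_{jn}<-t]\subset A_n(t)$ and again contradicting $x\in\partial_\Omega A_n(t)$. I do not anticipate any real obstacle. The only subtlety worth noting is the asymmetric structure of $A_n(t)$ in~\eqref{eq:3} (the super-level set is taken only for the distinguished index $1$, while sub-level sets are taken for every index), but this asymmetry is harmless: Condition~\ref{a:sigma2} always concludes with a \emph{negative} value $f_{jn}\le-\sigma$, i.e.\ places the point in one of the sub-level sets, which are the ones always present in $A_n(t)$ regardless of which index $i$ triggered the boundary.
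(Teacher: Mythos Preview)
Your proposal is correct and follows essentially the same argument as the paper: assume $u_{in}(x)\le\varepsilon_n$, use continuity to pin down $f_{in}(x)=t$ (resp.\ $-t$) so that $f_{in}(x)\le\sigma$, then invoke Condition~\ref{a:sigma2} to obtain $f_{jn}(x)\le-\sigma<-t$, which forces $x$ into the interior of $A_n(t)$. Your additional remarks (the continuity justification, the note that $\varepsilon_n\le\sigma$ for large $n$, and the observation about the asymmetric structure of $A_n(t)$) only make the argument more explicit.
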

\begin{proof}
If $u_{1n}(x) \le \varepsilon_n$ and $x \in \partial [f_{1n} > t]$, then 
$f_{1n}(x) = t \le \sigma$.  If $u_{in}(x) \le \varepsilon_n$ and $x \in 
\partial [f_{in} < -t]$, then $f_{in}(x) = - t \le \sigma$.  In both cases by 
Condition~\ref{a:sigma2} there exists $j$ such that $f_{jn}(x) \le - \sigma < 
-t$, so $x$ belongs to the interior of $A_n(t)$ and the lemma follows.
\end{proof}

Estimate the left-hand side of~\eqref{eq:p03}:
\begin{align*}
  \frac{1}{\varepsilon_n}
  \int_\Omega \sum_{i = 1}^N \tilde u_{in} |\nabla f_{in}|^p
  \, \mathrm dx & \ge
                  \sum_{i = 1}^N \int_{[u_{in} > \varepsilon_n]} |\nabla 
                  f_{in}|^p
                  \, \mathrm dx
  \\
                & \ge
                  \sum_{i = 1}^N
                  \frac{1}{|[u_{in} > \varepsilon_n]|^{p - 1}}
                  \left(
                  \int_{[u_{in} > \varepsilon_n]} |\nabla f_{in}|
                  \, \mathrm dx
                  \right)^p
  \\
                & \ge
                  \frac{1}{N^{p-1}|\Omega|^{p - 1}}
                  \left(
                  \sum_{i = 1}^N
                  \int_{[u_{in} > \varepsilon_n]} |\nabla f_{in}|
                  \, \mathrm dx
                  \right)^p
                  .
\end{align*}
Now we apply the coarea formula:
\begin{multline*}
  \frac{1}{\varepsilon_n}
  \int_\Omega \sum_{i = 1}^N \tilde u_{in} |\nabla f_{in}|^p
  \, \mathrm dx
  \\
  \ge 
  \frac{1}{N^{p-1} |\Omega|^{p - 1}}
  \left(
    \sum_{i = 1}^N
    \int_{0}^{\infty}
    P ([f_{in} > t] ; [u_{in} > \varepsilon_n])
    \, \mathrm dt
    +
    \sum_{i = 1}^N
    \int_{-\infty}^{0}
    P ([f_{in} < t] ; [u_{in} > \varepsilon_n])
    \, \mathrm dt
  \right)^p
  \\
  \ge
  \frac{1}{N^{p-1}|\Omega|^{p - 1}}
  \left(
    \int_{\frac\sigma2}^{\sigma}
    \left(
    P ([f_{1n} > t] ; [u_{1n} > \varepsilon_n])
  +
    \sum_{i = 1}^N
    P ([f_{in} < -t] ; [u_{in} > \varepsilon_n])
    \right)
    \mathrm dt
  \right)^p
  \\
  =
  \frac{1}{N^{p-1}|\Omega|^{p - 1}}
  \left(
    \int_{\frac\sigma2}^{\sigma}
    \left(
    \mu_{[f_{1n} > t]} ([u_{1n} > \varepsilon_n])
  +
    \sum_{i = 1}^N
    \mu_{[f_{in} < -t]}([u_{in} > \varepsilon_n])
    \right)
    \mathrm dt
  \right)^p
  .
\end{multline*}
Using~\eqref{eq:11} and \eqref{eq:12}, we obtain
\begin{multline*}
  \frac{1}{\varepsilon_n}
  \int_\Omega \sum_{i = 1}^N \tilde u_{in} |\nabla f_{in}|^p
  \, \mathrm dx
  \ge
  \frac{1}{N^{p-1}|\Omega|^{p - 1}}
  \\ \times\left(
    \int_{\frac\sigma2}^{\sigma}
    \left(
    \mu_{[f_{1n} > t]} (\partial_\Omega A_n(t) \cap \partial [f_{1n} > t])
  +
    \sum_{i = 1}^N
    \mu_{[f_{in} < -t]}(\partial_\Omega A_n(t) \cap \partial [f_{in} <
    -t ])
    \right)
    \mathrm dt
  \right)^p
  .
\end{multline*}
Using the inclusions
\begin{gather*}
\supp \mu_{[f_{1n} > t]} \subset \partial [f_{1n} > t]
,
\\
\supp \mu_{[f_{in} < t]} \subset \partial [f_{in} < -t]
,
\\
\supp\mu_{A_n(t)} \cap \Omega \subset \partial_\Omega A_n(t)
,
\end{gather*}
we get
\begin{multline*}
  \frac{1}{\varepsilon_n}
  \int_\Omega \sum_{i = 1}^N \tilde u_{in} |\nabla f_{in}|^p
  \, \mathrm dx
  \\
  \ge
  \frac{1}{N^{p-1}|\Omega|^{p - 1}}
  \left(
    \int_{\frac\sigma2}^{\sigma}
    \left(
    \mu_{[f_{1n} > t]} (\partial_\Omega A_n(t))
  +
    \sum_{i = 1}^N
    \mu_{[f_{in} < -t]}(\partial_\Omega A_n(t))
    \right)
    \mathrm dt
  \right)^p
  \\
  =
  \frac{1}{N^{p-1}|\Omega|^{p - 1}}
  \left(
    \int_{\frac\sigma2}^{\sigma}
    \left(
    P([f_{1n} > t]; \partial_\Omega A_n(t))
  +
    \sum_{i = 1}^N
    P([f_{in} < -t];\partial_\Omega A_n(t))
    \right)
    \mathrm dt
  \right)^p
  \\
  \ge
  \frac{1}{N^{p-1}|\Omega|^{p - 1}}
  \left(
    \int_{\frac\sigma2}^{\sigma}
    P(A_n(t); \partial_\Omega A_n(t))
    \mathrm dt
  \right)^p
  \\
  =
  \frac{1}{N^{p-1}|\Omega|^{p - 1}}
  \left(
    \int_{\frac\sigma2}^{\sigma}
    P(A_n(t); \Omega)
    \mathrm dt
  \right)^p
  .
\end{multline*}
Estimating the relative perimeter by Lemma~\ref{lem:c2a1}, we
conclude that
\begin{equation*}
  \frac{1}{\varepsilon_n}
  \int_\Omega \sum_{i = 1}^N \tilde u_{in} |\nabla f_{in}|^p
  \, \mathrm dx
    \ge \frac{1}{N^{p-1}|\Omega|^{p - 1}}
  \left(
    \frac{\sigma p_0}{2}
  \right)^p
  .
\end{equation*}

Now from \eqref{eq:p03} we get
\begin{equation*}
  \frac{1}{N^{p-1}|\Omega|^{p - 1}}
  \left(
    \frac{\sigma p_0}{2}
  \right)^p
  \le
  M \varepsilon_n |[v_n \le \varepsilon_n]|
  ,
\end{equation*}
where the left-hand side is a positive constant, and the right-hand
side goes to 0 as $n \to \infty$, a contradiction.

\section*{Appendix}

\begin{proof}[Proof of Proposition~\ref{pr:cuboid}]
Denote the solution set of~\eqref{eq:ineqs} by $P$.

\emph{Step 1.  Basis of the induction.}  We use induction over $N$.  A direct 
verification shows that the statement is true for $N = 1$.

\emph{Step 2.  Positivity of the determinants $\Delta_I$ implies that $P$ is 
bounded.}  It is well-known that the polyhedron $P$ is bounded if and only if the homogeneous system
\begin{equation}
\label{eq:3s10}
\left\{
\begin{array}{r}
a_{11} u_1 + \dots + a_{1N} u_N \le 0, \\
\dots \\
a_{N1} u_1 + \dots + a_{NN} u_N \le 0, \\
 u_1 \ge 0, \\
\dots \\
u_N \ge 0, \\
\end{array}
\right.
\end{equation}
admits only the trivial solution.  Assume that contrary to the claim, 
system~\eqref{eq:3s10} has a nontrivial solution.  As the system has rank $N$, 
it has a nontrivial solution $\mathbf b = (b_1, \dots, b_N)$ satisfying exactly $N - 
1$ independent inequalities with equality.  If
\begin{equation*}
b_N = 0 = a_{N1}b_1 + \dots + a_{NN}b_N
,
\end{equation*}
then $(b_1, \dots, b_{N-1})$ is a nontrivial solution of
\begin{equation*}
\left\{
\begin{array}{r}
a_{11} u_1 + \dots + a_{1,N-1} u_{N-1} \le 0, \\
\dots \\
a_{N-1,1} u_1 + \dots + a_{N-1,N-1} u_{N-1} \le 0, \\
 u_1 \ge 0, \\
\dots \\
u_{N-1} \ge 0, \\
\end{array}
\right.
,
\end{equation*}
which contradicts the induction assumption.  More generally, for no $i$ can we 
have
\begin{equation*}
b_i = 0 = a_{i1}b_1 + \dots + a_{iN}b_N
.
\end{equation*}
Thus, without loss of generality we can assume that $b_{r+1} = \dots = b_N = 
0$, $b_r = 1$, while $b_1$, \dots, $b_{r-1}$ are positive, solve
\begin{equation}
\label{eq:3s11}
\left\{
\begin{array}{l}
a_{11} u_1 + \dots + a_{1,r-1} u_{r-1} = - a_{1r}, \\
\dots \\
a_{r-1,1} u_1 + \dots + a_{r-1,r-1} u_{N-1} = -a_{r-1, r}, \\
\end{array}
\right.
\end{equation}
and satisfy
\begin{equation}
\label{eq:3s12}
a_{r1} b_1 + \dots + a_{r,r-1} b_{N-1} + a_{rr} < 0.
\end{equation}
Using Cramer's rule to solve~\eqref{eq:3s11}, we see that
\begin{equation}
\label{eq:3s13}
b_r = \frac{\Delta_{i}}{\Delta_r} \quad (i = 1, \dots, r-1),
\end{equation}
where $\Delta_i$ is the $r,i$-cofactor of the matrix
\begin{equation*}
\begin{bmatrix}
a_{11} & \dots & a_{1r} \\
\vdots & \ddots & \vdots \\
a_{r1} & \dots & a_{rr} \\
\end{bmatrix}
.
\end{equation*}
for $i = 1, \dots, r$.  Plugging the representation~\eqref{eq:3s13} 
into~\eqref{eq:3s12} and applying the Laplace formula, we obtain
\begin{equation*}
\frac{
\begin{vmatrix}
a_{11} & \dots & a_{1r} \\
\vdots & \ddots & \vdots \\
a_{r1} & \dots & a_{rr} \\
\end{vmatrix}
}{
\begin{vmatrix}
a_{11} & \dots & a_{1, r-1} \\
\vdots & \ddots & \vdots \\
a_{r-1,1} & \dots & a_{r-1, r-1} \\
\end{vmatrix}
}
< 0
,
\end{equation*}
which contradicts the positivity of all $\Delta_I$.  Thus, $P$ is bounded.

\emph{Step 3.  Positivity of the determinants implies that the set of vertices 
of $P$ is $\{\mathbf u_I \colon I \subset \{1, \dots, N\}\}$.}  According to 
Remark~\ref{rem:cramer}, each $\mathbf u_I$ solves~\eqref{eq:ineqs}.  
Moreover, $\mathbf u_I$ satisfies with equality the subsystem 
\begin{equation*}
\left\{
\begin{array}{l}
f_j \ge 0 \quad (j \notin I), \\
u_i \ge 0 \quad (i \in I)
\end{array}
\right.
\end{equation*}
of~\eqref{eq:ineqs} of rank $N$.  Consequently, each $\mathbf u_I$ is a vertex 
of $P$.  We must prove that $P$ has no other vertices.

In the hyperplane $u_N = 0$ consider the polyhedron $P'$ being the solution 
set of
\begin{equation}
\label{eq:3s02}
\left\{
\begin{array}{l}
\tilde f_i \ge 0 \quad (i = 1, \dots, N - 1), \\
u_i \ge 0 \quad (i = 1, \dots, N - 1),
\end{array}
\right.
\end{equation}
where $\tilde f_i$ is the restriction of $f_i$ to the hyperplane.  By the 
induction assumption, $P'$ is an $(N-1)$-dimensional polytope with vertices 
$\{\mathbf u_I \colon I \ni N\}$.  We claim that it is a facet of $P$.  
Indeed, let $\widetilde P$ be the face of $P$ on the hyperplane $\{u_N = 
0\}$.  On this hyperplane $\widetilde P$ is given by
\begin{equation}
\label{eq:fphi1}
\left\{
\begin{array}{l}
\tilde f_i \ge 0 \quad (i = 1, \dots, N), \\
u_i \ge 0 \quad (i = 1, \dots, N - 1),
\end{array}
\right.
,
\end{equation}
and the inclusion $\widetilde P \subset P$ is immediate.  On the other hand, 
by the induction assumption, any vertex of $P'$ is one of the points $\mathbf 
u_I$ lying in the hyperplane, so it is a vertex of $P$ and also of $\widetilde P$.  
Consequently, $P' \subset \widetilde P$.  Thus, we have $P' = \widetilde P$, 
and $P'$ is an $(N-1)$-dimensional face of $P$.  The vertex $\mathbf 
u_\varnothing$ of $P$ does not belong to the hyperplane $\{u_N = 0\}$, so $P$ 
has dimension $N$, and $P'$ is its facet.

Likewise, the interection of $P$ with any hyperplane $u_i = 0$ is a facet 
of~$P$ having the vertices $\{\mathbf u_I \colon I \ni i\}$.

Let $\mathbf v = (v_1, \dots, v_N)$ be a vertex of $P$.  Then $\mathbf v$ 
satisfies with equalities a subsystem of~\eqref{eq:ineqs} of rank~$N$ 
consisting of $N$ inequalities.  If this subsystem is $f_i \ge 0$ ($i = 1, 
\dots, N$), then $\mathbf v = \mathbf u_{\varnothing}$.  Otherwise, we have 
$v_i = 0$ for some $i$, so $\mathbf v$ lies in the hyperplane $u_i = 0$ and by 
the above, coincides with one of $\mathbf u_I$.  Thus, the set of vertices of 
$P$ is exactly $\{\mathbf u_I \colon I \subset \{1, \dots, N\}\}$.

\emph{Step 4.  Positivity of the determinants implies that the facets of $P$ 
are the intersections of $P$ with the hyperplanes $u_i = 0$ and $f_i = 0$, $i 
= 1, \dots, N$}.  As $P$ is given by~\eqref{eq:ineqs}, each facet of $P$ is 
the intersection of $P$ with a hyperplane of the form $u_i = 0$ or $f_i = 0$.  
Conversely, each intersection of this form is a facet of $P$.  Indeed, we have 
already checked this in the case of the hyperplanes $u_i = 0$.  Now let $P' = 
P \cap \{f_1 = 0\}$.  The face $P'$ contains, among others, the vertices 
$\mathbf u_I$, where $I \not\ni 1$ and $I \ni N$.  By the induction 
assumption, these are precisely the vertices of an $(N - 2)$-dimensional facet 
of $P'' = P \cap \{u_N = 0\}$.  Thus, $\dim P' \ge N - 2$.  On the other hand, 
$P'$ contains $\mathbf u_\varnothing$, which is affinely independent of 
$\{\mathbf u_I \colon I \ni N\}$, so actually $\dim P' = N - 1$, as claimed.

\emph{Step 5.  Positivity of the determinants implies that $P$ is 
combinatorially isomorphic to a cube.}  Indeed, considering the cube as the 
solution set of
\begin{equation*}
\left\{
\begin{array}{l}
1 - u_i \ge 0 \quad (i = 1, \dots, N), \\
u_i \ge 0 \quad (i = 1, \dots, N),
\end{array}
\right.
,
\end{equation*}
we see that the mapping $\mathbf u_I \mapsto (\alpha_1, \dots, \alpha_N)$, 
where
\begin{equation*}
\alpha_i =
\begin{cases}
0, & \text{if } i \in I, \\
1 & \text{otherwise}
,
\end{cases}
\end{equation*}
preserves facets.

\emph{Step 6.  The geometric properties of $P$ imply the positivity of the 
determinants.} Conversely, assume that solution set of~\eqref{eq:ineqs} is a polytope 
with vertices $\{\mathbf u_I \colon I \subset \{1, \dots, N\}\}$ 
combinatorially isomorphic to a cube.  Observe that given $i$, the set of vertices of the facet $P \cap \{u_i = 0\}$ is $\{\mathbf u_I \colon I \ni i\}$.  Consequently, if $i \in I$, the vertex $\mathbf u_{\mathsf CI}$ does not belong to this facet, so $u_{\mathsf CIi} > 0$ whenever $i \in I$.  Likewise, 
 $f_j(\mathbf u_{\mathsf CI}) > 0$ whenever $j \notin I$.  Now it follows 
from~\eqref{eq:ufu} that all the determinants $\Delta_I$ and $\Delta_{I,j}$ 
have the same sign.  As the facet $P' = P \cap \{u_N = 0\}$ enjoys analogous 
geometric properties, by induction we see that the determinants are actually positive.
\end{proof}

\begin{proof}[Proof of Lemma~\ref{lem:obs}]
We fix an admissible set of coefficients and $j$ and prove that
\begin{equation}
  \label{eq:obs0}
\sup
\{
\min_{i} f_i(\mathbf u)
\colon
\mathbf u = (u_1, \dots, u_N) \ge 0,
u_j \le \sigma, f_j(\mathbf u) \le \sigma
\}
\le - \sigma
\end{equation}
with some $\sigma$ independent of the coefficients.  For definiteness, assume 
that $j = N$.

As above, denote the polytope given by~\eqref{eq:ineqs} by $P$.

It follows from Remark~\ref{rem:cramer} that there exists $c > 0$ independent 
of the coefficients such that $P$ has no vertices in the open slab $\{0 < u_N 
< 2c\}$.  In other words, all the vertices of the polytope $P_{2c} := P \cap 
\{0 \le u_N \le 2c\}$ lie on the hyperplanes $u_N = 0$ and $u_N = 2c$.  It is 
easy to check that any point of $P_{2c}$ belonging to the hyperplane $u_N = c$ 
is the midpoint of a line segment with the endpoints on the facets $P_{2c}' = 
P_{2c} \cap \{u_N = 0\}$ and $P_{2c}'' = P_{2c} \cap \{u_N = 2c\}$ of $ P$, the 
former being also a facet of $P$.

By Remark~\ref{rem:cramer}, there exists $c' > 0$ independent of the 
coefficients such that $f_N \ge 2c'$ on each vertex of the facet $P \cap
\{u_N = 0\} = P_{2c}'$, so $f_N \ge 2c'$ on $P_{2c}'$.  Also $f_N
\ge 0$ on $ P_{2c}'' \subset P$.  Consequently, $f_N \ge c'$ on
$P_{2c} \cap \{u_N = c\} = P \cap \{u_N = c\}$.

By the above, all the vertices of the polytope $P_c = P \cap \{0 \le
u_N \le c \}$ lie in the halfspace $f_N \ge c'$, therefore so
does the polytope itself. In other words, the polytope $P_c$ is the solution set of
\begin{equation*}
  \left\{
    \begin{array}{l}
      f_i \ge 0 \quad (i = 1, \dots, N - 1),
      \\
      u_i \ge 0 \quad (i = 1, \dots, N),
      \\
      u_N \le c,
    \end{array}
  \right.
\end{equation*}
and this system implies the inequality $f_N - c' \ge 0$.  By the
Minkowski--Farkas theorem, there exist nonnegative $\alpha_i$,
$\beta_i$, $\gamma$, and $\delta$ such that
\begin{equation}
\label{eq:obs02}
  f_N - c'
  = \sum_{i = 1}^{N-1} \alpha_i f_i
  + \sum_{i = 1}^{N} \beta_i u_i
  + \gamma (c - u_N)
  + \delta
  .
\end{equation}
Generally speaking, representation~\eqref{eq:obs02} is not unique, but
we claim that possible values of $\alpha_i$ are uniformly bounded with
respect to admissible coefficients.  Indeed, plugging the zero vertex
into~\eqref{eq:obs02}, we get
\begin{equation*}
  f_N(0) - c'
  = \sum_{i = 1}^{N-1} \alpha_i \tilde f_i(0)
  + \gamma c
  + \delta
  ,
\end{equation*}
whence
\begin{equation*}
  \sum_{i = 1}^{N-1} \alpha_i f_i(0)
  \le
  f_N(0) - c'
  .
\end{equation*}
By Remark~\ref{rem:cramer}, the right-hand side is bounded, and the values of 
$f_i(0)$ in the left-hand side are bounded away from 0. Consequently, there 
exists $C>0$ independent of admissible coefficients such that
\begin{equation*}
  \alpha_i \le C
\end{equation*}
for any possible choice of $\alpha_i$, as claimed.

Now write~\eqref{eq:obs02} in the form
\begin{equation*}
  \sum_{i=1}^{N-1} \alpha_i f_i
  =
  - \frac{c'}{2}
  + \left(
    f_N - \frac{c'}{2}
  \right)
  - \sum_{i=1}^N \beta_i u_i
  - \gamma(c - u_n)
  - \delta
\end{equation*}
and observe that whenever $u_N \le c$ and $f_N \le c'/2$, we have
\begin{equation*}
  \sum_{i=1}^{N-1} \alpha_i f_i
  \le
  - \frac{c'}{2}
  ,
\end{equation*}
whence
\begin{equation*}
\sum_{f_i < 0} f_i
\le -\frac{c'}{2C}
.
\end{equation*}
There are at most $N$ summands in the left-hand side, so for any
$\mathbf u$ satisfying said requirements there exists $f_i$ such that
$f_i(\mathbf u) \le - c'/(2CN)$.  Thus, \eqref{eq:obs0} is valid
with $\sigma = \min\{c, c'/2, c'/(2CN)\}$, which is clearly independent of 
particular choice of admissible coefficients.
\end{proof}

The following lemma is the first step towards proving Lemma~\ref{lem:gstab}.  
It ensures that $v$ does not vanish at infinity.

\begin{lemma}
\label{lem:limv}
\begin{equation}
\label{eq:limv}
\lim_{
\substack{|\mathbf u| \to \infty\\
\mathbf u \in \mathbb R^N_+
}}
v = \infty
\end{equation}
uniformly with respect to admissible coefficients.
\end{lemma}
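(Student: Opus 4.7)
The plan is to argue by contradiction via rescaling. Suppose the claimed uniform limit fails: there exist $M>0$, a sequence of admissible coefficients $(A_n,\mathbf m_n)$, and $\mathbf u_n\in\mathbb R_+^N$ with $|\mathbf u_n|\to\infty$ but $v_n:=v(\mathbf u_n)\le M$. I first observe that $g_n=0$ forces $\mathbf u_n=A_n^{-1}\mathbf m_n$, which stays uniformly bounded by Remark~\ref{rem:cramer}; therefore $g_n>0$ eventually, and the ``whatever'' case in the definition of $v$ can be ignored.

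Next I rescale by $U_n:=\max_i u_{in}$, setting $\tilde u_{in}:=u_{in}/U_n\in[0,1]$; by construction $U_n\to\infty$ and $\max_i\tilde u_{in}=1$. Using the uniform bounds on the coefficients from Assumption~\ref{a:bounded} together with the coordinate-wise compactness of $[0,1]^N$, I pass to subsequences so that $\tilde u_{in}\to\tilde u_i$ with $\max_i\tilde u_i=1$, $a_{ij,n}\to a_{ij}$, and $m_{i,n}\to m_i$. Because Assumptions~\ref{a:bounded}--\ref{a:det-am} are defined by closed inequalities, the limiting coefficients remain admissible with the same $\kappa$.

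Now I pass to the limit in the rescaled quantities. Writing $\bar f_i:=\sum_j a_{ij}\tilde u_j$, one has $f_{in}/U_n\to -\bar f_i$ and $g_n/U_n^p\to\bar g:=\sum_i|\bar f_i|^p$. The key point is that $\bar g>0$: otherwise $A\tilde u=0$ with $\tilde u\ne 0$, contradicting $\det A\ge\kappa$ (Assumption~\ref{a:det-a} with $I=\{1,\dots,N\}$). Consequently
\[
\frac{v_n}{U_n}=\sum_{i=1}^N \tilde u_{in}\,\frac{|f_{in}|^p/U_n^p}{g_n/U_n^p}\;\longrightarrow\;\sum_{i=1}^N \tilde u_i\,\frac{|\bar f_i|^p}{\bar g}.
\]
Since $v_n\le M$ while $U_n\to\infty$, the left-hand side tends to $0$, so $\sum_i\tilde u_i|\bar f_i|^p=0$; hence for each $i$ either $\tilde u_i=0$ or $\bar f_i=0$.

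To conclude, set $J:=\{i:\tilde u_i>0\}$, which is nonempty since some $\tilde u_i=1$. For every $i\in J$ one has $\bar f_i=\sum_{j\in J}a_{ij}\tilde u_j=0$ (using $\tilde u_k=0$ for $k\notin J$), so the nonzero vector $(\tilde u_j)_{j\in J}$ lies in the kernel of the principal submatrix $(a_{ij})_{i,j\in J}$, contradicting $\Delta_J\ge\kappa>0$ from Assumption~\ref{a:det-a}. The main technical obstacle I anticipate is the possibility that the weights $|f_{in}|^p/g_n$ in the definition of $v$ concentrate on indices where $\tilde u_{in}$ is small; what rescues the argument is precisely the uniform lower bound on \emph{all} principal minors of $A$, which simultaneously prevents $\bar g=0$ and forces the limiting configuration $\tilde u$ to vanish on~$J$.
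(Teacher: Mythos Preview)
Your proof is correct and follows essentially the same strategy as the paper: argue by contradiction, rescale by the size of $\mathbf u_n$, pass to a subsequence so that the rescaled vectors and coefficients converge (the limit coefficients remaining admissible), and then use Assumption~\ref{a:det-a} on principal minors to rule out the limiting configuration. The only cosmetic differences are that the paper writes $u_{in}=b_i\tau_n+o(\tau_n)$ and compares growth orders of the two sides of the identity $v g=\sum_i u_i|f_i|^p$ (showing the right side is $\asymp\tau_n^{p+1}$ while the left is $O(\tau_n^p)$), whereas you normalize by $U_n=\max_i u_{in}$ and compute $\lim v_n/U_n$ directly; your presentation is slightly more explicit but the underlying idea is identical.
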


\begin{proof}
Assume the contrary: there exist $C > 0$, a sequence of admissible sets of 
coefficients $\{(A_n, \mathbf m_n)\}$ and a sequence $\{\mathbf u_n\} \subset 
\mathbb R^N$ such that $\mathbf u_n \ge 0$, $\mathbf u_n \to \infty$, and $0 
\le v_n (\mathbf u_n) \le C$, where $v_n$ is corresponding to the coefficients 
$(A_n, \mathbf m_n)$.  By $f_{in}$ denote the affine functions corresponding 
to $(A_n, \mathbf m_n)$, and by $f'_{in}$, associated linear functionals.

By Assumption~\ref{a:bounded}, the sequence $\{(A_n, \mathbf m_n)\}$ is 
bounded.  Without loss of generality, $(A_n, \mathbf m_n) \to (A_*, \mathbf 
m_*)$, the limiting coefficients also being admissible.  Let $f_{i*}$ be the 
corresponding affine functions, and $f'_{i*}$ be the associated linear 
functions.

Without loss of generality,
\begin{equation}
\label{eq:contu01}
u_{in} = b_i \tau_n + o(\tau_n)
,
\end{equation}
where $b_i \ge 0$ is finite, $\mathbf b = (b_1, \dots, b_N) \ne 0$, $\tau_n 
\to \infty$.  We have:
\begin{equation*}
f_{in}(\mathbf u_n) = m_{in} + f'_{in}(\mathbf u_n)
= m_{in} + f'_{in}(\mathbf b) \tau_n + o(\tau_n)
= f'_{in}(\mathbf b) \tau_n + o(\tau_n)
\end{equation*}
As $f'_{in} \to f'_{i*}$ and the sequences $\{m_{in}\}$ are bounded, we obtain
\begin{equation}
\label{eq:contu02}
f_{in}(\mathbf u_n) = f'_{i*}(\mathbf b) \tau_n + o(\tau_n)
.
\end{equation}

Plugging representations \eqref{eq:contu01} and \eqref{eq:contu02} into the 
right-hand side of~\eqref{eq:vg}, we obtain:
\begin{equation*}
\sum_{i=1}^N u_{in} |f_{in}(\mathbf u_n)|^p
=
\left(
\sum_{i=1}^N b_i |f'_{i*}(\mathbf b)|^p
\right)
\tau_n^{p+1}
+ o(\tau_n^{p+1})
.
\end{equation*}
Here the leading coefficient does not vanish.  If it did, we would have $b_i 
|f'_{i*}| = 0$ for any $i$, so $\mathbf b$ would solve the linear system
\begin{equation*}
\left\{
\begin{array}{l}
f'_{i*}(\mathbf b) = 0 \quad (i \in I), \\
b_i = 0 \quad (i \notin I)
\end{array}
\right.
\end{equation*}
for some $I \subset \{1, \dots, N\}$.  But due to Assumption~\ref{a:det-a} 
this system only has the trivial solution.

Thus, the right-hand side of~\eqref{eq:vg} grows as $\tau_n^{p+1}$.  On the 
other hand, a trivial verification shows that the left-hand side 
of~\eqref{eq:vg} is $O(\tau_n^p)$, a contradiction.
\end{proof}

\begin{proof}[Proof of Lemma~\ref{lem:gstab}]
Suppose, contrary to our claim, that there exist $\varepsilon > 0$, a sequence 
$\{(A_n, \mathbf m_n)\}$ of admissible coefficients, and a sequence $\{\mathbf 
u_n = (u_{1n}, \dots, u_{Nn})\} \subset \mathbb R_+$ such that
\begin{equation*}
v_{n}(\mathbf u_n) \to 0
\end{equation*}
and
\begin{equation}
\label{eq:gstab02}
| \mathbf u_n - \mathbf{u}_{In} | \ge \varepsilon
\quad (I \ne \varnothing)
,
\end{equation}
where $v_n$ and $\mathbf u_{In}$ corresponds to the coefficients $(A_n, 
\mathbf m)$.  Due to Assumptions~\ref{a:bounded}--\ref{a:det-am} (see also 
Remark~\ref{rem:cramer}) there is no loss of generality in assuming that 
$(A_n, \mathbf m) \to (A_*, \mathbf m_*)$, where the limit is also admissible, 
and $\mathbf{u}_{I_n} \to \mathbf{u}_{I*}$ for each $I \ne \varnothing$, where 
the limit satisfies \eqref{eq:vi}.  Thus, if $v_*$ corresponds to the limiting 
coefficients, we have $v_* (\mathbf{u})= 0$ if and only if $\mathbf u = 
\mathbf{u}_{I*}$ for some $I \ne \varnothing$.

Due to Lemma~\ref{lem:limv}, $\{\mathbf u_n\}$ is bounded, and we can assume 
that $\mathbf u_n \to \mathbf u_* = (u_{1*},\dots,u_{N*})\in \mathbb R^N_+$.  
Passing to the limit in~\eqref{eq:gstab02}, we get
\begin{equation}
\label{eq:gstab03}
| \mathbf u_* - \mathbf{u}_{I*} | \ge \varepsilon
\quad (I \ne \varnothing)
.
\end{equation}
By \eqref{eq:uvu},
\begin{equation*}
\min_i u_{i*} = \lim_{n\to\infty} \min_i u_{in} \le
\lim_{n\to\infty} v_{n}(\mathbf{u}_n) = 0,
\end{equation*}
so $\mathbf u_* \ne \mathbf u_\varnothing$ (Remark~\ref{rem:cramer}).  Thus, 
$g_*(\mathbf u_*) \ne 0$ and we can pass to the limit:
\begin{equation*}
v_*(\mathbf u_*) = \lim_{n \to \infty} v_{n}(\mathbf u_n) = 0
.
\end{equation*}
This and the fact that $\mathbf u_*$ is nonnegative implies $\mathbf u_* = 
\mathbf{u}_{I*}$ for some $I$, which
contradicts~\eqref{eq:gstab03}.
\end{proof}
\begin{proof}[Proof of Corollary~\ref{cor:genlimv}]
Given $I \ne \varnothing$, consider the norm
\begin{equation*}
| \mathbf{u} |_I = \sum_{i \in I} |u_i| + \sum_{j\notin I} |f'_j(\mathbf{u})|
,
\end{equation*}
which implicitly depends on the choice admissible coefficients.  Observe that
\begin{equation}
\label{eq:gstab10}
C_1 |\mathbf{u}| \le |\mathbf{u}|_I \le C_2|\mathbf{u}|
,
\end{equation}
where $C_1$ and $C_2$ depend on $\kappa$ but not on admissible coefficients.  
Indeed, letting for simplicity $I = \{1, \dots, r\}$, we have $|\mathbf{u}|_I 
= |A_I\mathbf{u}|$, where
\begin{equation*}
A_I =
\begin{bmatrix}
1     \\
& \ddots &&& \mathbf{0}\\
& & 1         \\
& & & -a_{r+1, r+1} & \dots & -a_{rN} \\
& \mathbf{0} & & \vdots     & \ddots & \vdots \\
& & & -a_{N, r+1}  & \dots  & -a_{NN} \\
\end{bmatrix}
\end{equation*}
and it follows from Assumptions~\ref{a:bounded} and~\ref{a:det-a} that the 
norms $\|A_I\|$ and $\|A_I^{-1}\|$ are bounded uniformly with respect to 
admissible coefficients.

By Assumption~\ref{a:bounded}, there exits $C$ depending on $\kappa$ such that 
for any admissible coefficients $\|f_{i}'\| \le C$ for any $i$, where 
$\|\cdot\|$ is the norm of a linear functional on $\mathbb R^N$.

Take $\varepsilon > 0$.  By Lemma~\ref{lem:gstab} there exists $\delta > 0$ 
independent of admissible coefficients such that whenever $v(\mathbf{u}) < 
\delta$, we have
\begin{equation*}
\min_{I \ne \varnothing} |\mathbf{u} - \mathbf{u}_{I} | <
\frac{\varepsilon}{C+C_2}
.
\end{equation*}
Take $I \ne \varnothing$ such that
\begin{equation*}
|\mathbf{u} - \mathbf{u}_{I} | <
\frac{\varepsilon}{C+C_2}
,
\end{equation*}
then \eqref{eq:genlimv1} holds.

By Remark~\ref{rem:cramer}, for any $i \in I$ we have 
$f_{i}(\mathbf{u}_{I}) \ge c$ with $c$ independent of admissible 
coefficients, so
\begin{equation*}
f_{i}(\mathbf{u})
= f_{i}(\mathbf{u}_{I} ) + f_{i}'( \mathbf{u} - 
\mathbf{u}_{I} )
\ge c - \varepsilon.
\end{equation*}
Without loss of generality, $\varepsilon < c/2$, so
\eqref{eq:genlimv2} holds with $\sigma = c/2$.
\end{proof}

\end{document}